\DeclareMathOperator{\arf}{Arf}
\newtheorem{thm}{Theorem}[section]
\newtheorem{lem}[thm]{Lemma}
\newtheorem{prop}[thm]{Proposition}
\newtheorem{cor}[thm]{Corollary}
\theoremstyle{definition}
\newtheorem{rem}[thm]{Remark}
\numberwithin{equation}{section}
\numberwithin{figure}{section}
\newcommand{\Z}{\mathbb{Z}}
\newcommand{\Q}{\mathbb{Q}}
\newcommand{\DK}{D_{K}(S^{3})}
\titleformat{\section} {\normalfont\scshape\bfseries\filcenter}{\thesection }{1em}{}
\titleformat{\subsection} {\normalfont\scshape\bfseries\filcenter}{\thesubsection}{1em}{}
\begin{document}

\title[{NON-ORIENTABLE 4-GENUS OF TORUS KNOTS}]{NON-ORIENTABLE 4-GENUS OF TORUS KNOTS}

\author[MEGAN FAIRCHILD, HAILEY JAY GARCIA, JAKE MURPHY, HANNAH PERCLE]{MEGAN FAIRCHILD, HAILEY JAY GARCIA, JAKE MURPHY, HANNAH PERCLE}
\address{Department of Mathematics \\ Louisiana State University}
\email{mfarr17@lsu.edu}
\email{jgarc86@lsu.edu}
\email{jmurp61@lsu.edu}
\email{hpercl1@lsu.edu}

\maketitle

\textbf{ABSTRACT.} The non-orientable 4-genus of a knot $K$ in $S^{3}$, denoted $\gamma_4(K)$, measures the minimum genus of a non-orientable surface in $B^{4}$ bounded by $K$. We compute bounds for the non-orientable 4-genus of knots $T_{5, q}$ and $T_{6, q}$, extending previous research. Additionally, we provide a generalized, non-recursive formula for $d(S^{3}_{-1}(T_{p,q}))$, the $d$-invariant of -1-surgery on torus knots.

\section{INTRODUCTION}

Given a knot $K$ in $S^3$, it is a well-studied problem to determine the minimum genus of an orientable surface in $B^{4}$ with boundary $K$. This is called the orientable 4-genus of $K$ and was first studied by Fox and Milnor when they defined the slice knot in 1958, that is, knots in $S^{3}$ that bound a smoothly embedded disk in $B^{4}$. We cite their 1966 paper \cite{fox1966singularities}, as the 1958 reference is from unpublished work. Since then, there has been steady progress in determining the orientable 4-genus of knots in $S^3$. In 2000, Murakami and Yasuhara \cite{MY} introduced the non-orientable four genus, denoted $\gamma_4(K)$. This is defined as the minimum genus of a non-orientable surface $F$ properly embedded in $B^{4}$ so that the boundary of $F$ is the knot $K$.

The non-orientable 4-genus of knots with 8 or 9 crossings was computed by Jabuka and Kelly \cite{JK}. Ghanbarian in \cite{N10} expanded this to knots with 10 crossings, and the first author determined the non-orientable 4-genus of non-alternating 11 crossing knots \cite{fairchild2024non}. Additionally, the non-orientable 4-genus of many double twist knots has been computed by Hoste, Shanahan, and Van Cott in \cite{dbltwist}. The Milnor conjecture, proved by Kronheimer and Mrowka in 1993 \cite{milnorconj}, states that the smooth 4-genus of a torus knot $T_{p, q}$ is $\frac{1}{2}(p-1)(q-1)$. Andrew Lobb proved that the non-orientable analog of the Milnor conjecture is false by showing that the non-orientable 4-genus of $T_{4,9}$ is 1 in~\cite{lobb2019counterexample}. Jabuka and Van Cott found additional counterexamples in \cite{NOmilnor}.

In this paper, we focus on the non-orientable 4-genus of torus knots. These knots are denoted $T_{p, q}$, where $p$ and $q$ are relatively prime. The non-orientable 4-genus of torus knots has been computed for all knots $T_{2, q}$ and $T_{3, q}$ by Allen \cite{Allen}, and most knots $T_{4, q}$ by Binns, Kang, Simone, Tru\"ol, and Sabloff \cite{torus1, torus2}. We aim to extend this research by computing the non-orientable 4-genus for torus knots $T_{5, q}$ and $T_{6, q}$. We explore various techniques in calculating $\gamma_{4}(T_{p, q})$ via computation of other knot invariants. One such invariant is the $d$-invariant from Heegaard Floer homology, which is computable by a recursive formula using the Alexander polynomial given in \cite{batson, OSref}. We update this to provide a general formula the $d$-invariant of torus knots.

\begin{thm}
\hyperref[dinvTHM]{For a torus knot $T_{p, q}$ where $p<q$,
    \[ d(S^{3}_{-1}(T_{p,q}))=2\left(\left\lfloor\dfrac{p}{2} \right\rfloor+\sum_{k=0}^{\lfloor\frac{p}{2} \rfloor-1}\left\lfloor\frac{(p-1-2k)q-p-1}{2p}\right\rfloor\right). \]}
\end{thm}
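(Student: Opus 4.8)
The theorem gives a closed-form evaluation of the $d$-invariant of $-1$-surgery on a torus knot. The natural starting point is the recursive formula from Ozsváth–Szabó / Batson cited in the excerpt, which computes $d(S^3_{-1}(K))$ from the Alexander polynomial of $K$ via the torsion coefficients. Recall that for a knot $K$ with symmetrized Alexander polynomial $\Delta_K(t) = \sum a_j t^j$, the torsion coefficients are $t_i(K) = \sum_{j \ge 1} j\, a_{|i|+j}$, and for an L-space knot one has $d(S^3_{-1}(K)) = -2 t_0(K)$ up to a sign convention; more precisely the relevant formula expresses $d(S^3_{-1}(K))$ in terms of $V_0(K) = t_0(K)$ when $K$ is an L-space knot. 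Since torus knots are L-space knots, I would first reduce the problem to computing $t_0(T_{p,q})$, i.e. the constant torsion coefficient, so that the whole theorem becomes an exercise in evaluating a sum determined by the semigroup of the singularity.

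**The semigroup description.** The key structural input is that $\Delta_{T_{p,q}}(t)$ is governed by the numerical semigroup $S = \langle p, q\rangle$. Specifically, writing $\Delta_{T_{p,q}}(t)/(1-t)$ as a power series, the coefficients count gaps of the semigroup, and $t_0(T_{p,q})$ equals the number of lattice points in a triangle, or equivalently $\#\{(a,b)\in\Z_{\ge0}^2 : ap+bq < (p-1)(q-1)/2 + \text{something}\}$ — the standard fact that $t_0$ of a torus knot counts gaps of $\langle p,q\rangle$ below the relevant threshold, or can be written as $\sum$ over residues. Concretely I would use the identity $t_0(T_{p,q}) = \sum_{i=0}^{\cdots} \#(S \cap [\,\cdot\,,\,\cdot\,])$ and then convert the count of semigroup elements (or gaps) in each residue class mod $p$ into a floor function. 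Each residue class $r \bmod p$ contributes a term of the form $\lfloor (\alpha q + \beta)/(2p)\rfloor$ once one counts how many multiples of $q$ in that class lie below the threshold, which is exactly the shape appearing in the theorem. Pairing residue class $k$ with $p-1-k$ (the Arf-type symmetry of the Alexander polynomial, reflecting $\Delta(t)=\Delta(t^{-1})$) collapses the $p$ terms into $\lfloor p/2\rfloor$ terms, producing the stated sum with index $k$ running from $0$ to $\lfloor p/2\rfloor - 1$ and the leading $\lfloor p/2\rfloor$ coming from the "diagonal"/boundary contributions.

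**Carrying it out.** First I would fix conventions: state the Ozsváth–Szabó recursion for $d(S^3_{-1}(K))$, specialize to L-space knots to get $d = -2 t_0$ (or $2V_0$, matching signs with the theorem's positive right-hand side — note the formula as written is negative of a sum of floors of typically-negative quantities, so signs must be tracked carefully). Second, I would write $t_0(T_{p,q})$ as a lattice-point count using the Alexander polynomial of the torus knot, $\Delta_{T_{p,q}}(t) = \frac{(t^{pq}-1)(t-1)}{(t^p-1)(t^q-1)}$. Third, split the count by residue class mod $p$: for each $k$, count integers of the form (something involving $q$) in that class below $(pq-p-q+1)/2$, which yields the floor $\lfloor ((p-1-2k)q - p - 1)/(2p)\rfloor$ after simplification. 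Fourth, use the symmetry $k \leftrightarrow p-1-k$ together with the parity of $p$ to fold the sum and extract the constant $\lfloor p/2\rfloor$.

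**Main obstacle.** The hard part is not the Heegaard Floer input — that is a black-box reduction to $t_0$ — but the elementary number theory: correctly converting the semigroup/residue-class count into precisely that floor expression, getting every off-by-one and the $p$-even versus $p$-odd cases right, and verifying that the folding under $k\mapsto p-1-k$ produces exactly the displayed leading term $\lfloor p/2\rfloor$ rather than $\lfloor p/2\rfloor$ plus a correction. I would sanity-check the final formula against known small cases — $T_{2,q}$ and $T_{3,q}$ (where $d(S^3_{-1})$ is classical), and $T_{5,q}$ for a couple of values of $q$ — before declaring victory.
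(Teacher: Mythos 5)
Your overall strategy matches the paper's: reduce $d(S^3_{-1}(T_{p,q}))$ to the torsion coefficient $t_0$ via the Ozsv\'ath--Szab\'o/Batson formula $d = 2t_0 = 2\sum_j j a_j$, then evaluate $t_0$ by a lattice-point count coming from the expansion
\[
t^{(p-1)(q-1)/2}\Delta_{p,q}(t)=\sum_{k=0}^{p-1}\sum_{n=0}^{\infty}t^{np+kq}(1-t),
\]
which is exactly the semigroup $\langle p,q\rangle$ description you invoke (each exponent $np+kq$ with $0\le k\le p-1$ is a distinct semigroup element). The paper observes that each monomial $t^m(1-t)$ contributes exactly $+1$ to $\sum_j ja_j$, so $t_0$ is literally the number of pairs $(n,k)$ with $np+kq+1\le \frac{(p-1)(q-1)}{2}$, and the theorem follows by counting those pairs.

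The gap is that you defer precisely the step that constitutes the proof, and the one structural idea you offer for it points in the wrong direction. You propose to run $k$ over all $p$ residue classes and then fold under $k\mapsto p-1-k$, attributing the leading $\lfloor p/2\rfloor$ to ``diagonal/boundary contributions'' of that folding. No folding occurs: the truncation at the symmetry threshold already kills the upper half of the classes. Setting $n=0$ in $np+kq\le\frac{(p-1)(q-1)}{2}-1$ gives $k\le\frac{p-1}{2}-\frac{p+1}{2q}$, and a short parity check (using $p<q$ so that $0<\frac{p+1}{2q}<\frac12$) yields $k\le\lfloor p/2\rfloor-1$ in both parities of $p$; the classes $k\ge\lfloor p/2\rfloor$ contribute nothing, so there is nothing to pair them with. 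Solving the same inequality for $n$ gives $n\le\bigl\lfloor\frac{(p-1-2k)q-p-1}{2p}\bigr\rfloor$, and the term $\lfloor p/2\rfloor$ in the final formula is simply the sum of the ``$+1$''s from counting $n=0,\dots,\lfloor\cdot\rfloor$ inclusively in each of the $\lfloor p/2\rfloor$ inner sums --- not a symmetry correction. Since you flag exactly this bookkeeping as the ``main obstacle'' and leave it undone, the proposal is a correct plan whose only concrete mechanism for the hard part would not work as described.
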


Knot invariants such as the $d$-invariant, signature, and Arf invariant provide lower bounds for the non-orientable 4-genus \cite{torus1}, and we computed these to obtain the following results. 

\begin{thm}
    For torus knots $T_{6, q}$, when $q \neq 1$, 
    \begin{enumerate}[(i)]
        \item if $q \equiv 5, 7, 11 \pmod{12}$, then $\gamma_{4}(T_{6, q}) \in \{ 2, 3 \}, $
        \item if $q \equiv 1 \pmod{12}$, then $\gamma_{4}(T_{6, q}) \in \{2, 3 \}$ if $q\equiv0\pmod{5}$, otherwise $\gamma_{4}(T_{6, q}) \in \{ 1, 2, 3\}$.
    \end{enumerate}
\end{thm}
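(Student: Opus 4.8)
The plan is to sandwich $\gamma_4(T_{6,q})$ between an explicit construction from above and obstruction bounds from below, and to reconcile the two by casework on the residue of $q$. The uniform upper bound $\gamma_4(T_{6,q})\le 3$ is what produces the common right endpoint of every interval in the statement, while the obstructions pin down whether the left endpoint is $1$ or $2$.

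\emph{Upper bound.} First I would build a non-orientable surface in $B^4$ of genus $3$ bounded by $T_{6,q}$, valid for every admissible $q$. The idea is a short chain of band (saddle) moves that lowers the braid index two strands at a time: one band move carries $T_{6,q}$ to $T_{4,q}$, a second carries $T_{4,q}$ to $T_{2,q}$, and $T_{2,q}$ bounds the standard embedded M\"obius band in $S^3$ (the band with $q$ half-twists), hence in $B^4$. Stacking these cobordisms and capping off, the Euler characteristic bookkeeping is: two $1$-handles attached to a surface with $\chi=0$ give a surface $F$ with $\chi(F)=-2$; since the M\"obius piece is one-sided, $F$ is non-orientable, so its non-orientable genus is $1-\chi(F)=3$. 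I would check the two band moves against the standard torus-knot cobordisms already used in \cite{torus1,NOmilnor}, and note that an orientable competitor cannot help since we only require $\le 3$.

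\emph{Lower bounds.} The easy half is $\gamma_4(T_{6,q})\ge 1$: for $q\ne\pm1$ the signature of $T_{6,q}$ is nonzero, so it is not slice; this already yields the left endpoint in part~(ii) when $5\nmid q$. The substantive point is upgrading this to $\gamma_4\ge 2$ when $q\equiv 5,7,11\pmod{12}$ and when $q\equiv 1\pmod{12}$ with $5\mid q$. Here I would assemble the three obstructions recorded in \cite{torus1}: Batson's $d$-invariant inequality (involving $\sigma(K)$, the normal Euler number of the filling, and $d(S^3_{-1}(K))$), the Gordon--Litherland-type signature bound, and the Arf-invariant parity constraint on the achievable non-orientable genus. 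The inputs for $K=T_{6,q}$ are the signature $\sigma(T_{6,q})$ from Murasugi's recursion, which is quasi-linear in $q$ of period $12$ — the source of the $q\bmod 12$ dichotomy; the Arf invariant, computed from $\Delta_{T_{6,q}}(-1)$; and $d(S^3_{-1}(T_{6,q}))$, delivered directly by Theorem~\ref{dinvTHM} with $p=6$. Substituting these, the terms growing linearly in $q$ cancel in the relevant combinations, leaving bounded period-$12$ quantities; a finite check on $q\bmod 12$ (refined by $q\bmod 8$ for the Arf term) then forces $\gamma_4\ge 2$ in the cases $q\equiv 5,7,11\pmod{12}$.

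\emph{The hard part} is the class $q\equiv 1\pmod{12}$, where the $\sigma$/$d(S^3_{-1})$/$\arf$ package is insensitive to $q\bmod 5$ and yields only $\gamma_4\ge 1$; this is why the generic interval there is $\{1,2,3\}$. To recover $\gamma_4\ge 2$ when additionally $5\mid q$, an extra ingredient is needed, and isolating it is the step I expect to demand the most care. The natural candidate is a branched-cover obstruction: the double branched cover $\Sigma_2(T_{6,q})$ has $H_1=\mathbb{Z}/q$, so its $d$-invariants genuinely detect the factor of $5$, and they constrain the non-orientable $4$-genus via the $d$-invariant inequality applied to the branched double cover of $(B^4,F)$. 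I would compute these $d$-invariants for the Seifert-fibered space $\Sigma_2(T_{6,q})$, identify a spin$^{c}$ structure for which the genus-$1$ case is excluded precisely when $5\mid q$, and verify the resulting bound depends on $q$ only through the stated congruences. Everything else then reduces to finite bookkeeping with the quasi-linear signature and $d$-invariant formulas above.
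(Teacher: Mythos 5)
Your overall architecture (a uniform genus-$3$ upper bound from pinch moves, obstruction-based lower bounds split by residue class) matches the paper, and your upper bound paragraph is essentially the paper's use of Lemma \ref{pinchBounds}. The genuine gap is in your lower bound for part (i). The obstruction package you enumerate --- the signature-plus-Arf condition of Proposition \ref{sigArf} and Batson's inequality $\sigma(K)/2 - d(S^3_{-1}(K)) \le \gamma_4(K)$ --- does not force $\gamma_4 \ge 2$ when $q \equiv 7 \pmod{12}$, and the paper's own computations show this: there $\sigma(6,q) + 4\arf(6,q) \equiv 2 \pmod 8$ (Table \ref{fig:sigarftable6}), $\left|\upsilon(6,q) + \sigma(6,q)/2\right| = 0$ (Proposition \ref{prop:upsSigCalc}), and $\sigma(6,q)/2 - d(6,q) = -3$ (Equation \ref{eqn:boundsd1}). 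The residue $q \equiv 11 \pmod{12}$ likewise fails for signature/Arf and for the $d$-invariant bound (values $6 \pmod 8$ and $-1$ respectively) and is only rescued by the upsilon bound of Theorem \ref{UpsSigBound}, which you do not include among your three obstructions. So your ``finite check on $q \bmod 12$'' would come up empty precisely at $q \equiv 7$. The paper obtains all of part (i) by citing Theorem \ref{pinchThm} (Theorem 1.6 of \cite{torus1}), i.e., the two-sided bound $\vartheta - 1 \le \gamma_4 \le \vartheta$ for $q \equiv p\pm 1, 2p-1 \pmod{2p}$; the lower half of that bound rests on the full Ozsv\'ath--Szab\'o/Batson inequality with the normal Euler number optimized over all possible fillings, which you allude to but would have to actually carry out --- it is the main technical content of \cite{torus1}, not a routine substitution of the invariants you list.

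For part (ii) with $q \equiv 0 \pmod 5$ you propose computing $d$-invariants of the double branched cover; the paper instead uses the elementary Gilmer--Livingston linking form obstruction (Corollary \ref{linkingNumThm} with Proposition \ref{lkformTorusknots}): $\lambda(a,a) = -6/(2q) = -3/q$ on $H_1(\Sigma(2,6,q)) \cong \Z_q$, and $-3n^2 \equiv 2n^2 \in \{0,2,3\} \pmod 5$ is never $\pm 1$, so $T_{6,q}$ bounds no M\"obius band. Your branched-cover $d$-invariant route is plausible in principle but is substantially heavier machinery for the same conclusion and is left entirely unexecuted, whereas the linking form check is a two-line computation mod $5$. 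The remaining pieces of your argument (the $\gamma_4 \ge 1$ statement when $5 \nmid q$, and the Euler characteristic bookkeeping for the upper bound) are fine.
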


Other techniques to finding the non-orientable 4 genus include band surgery and examining the linking form on the double branch cover, which are explored to give the following result. 

\begin{thm}
\hyperref[5qthm]{For torus knots $T_{5, q}$,} 
    \begin{enumerate}[(i)]
        \item \hyperref[5qthm]{If $q \equiv \pm 2 \pmod{5}$, then $\gamma_{4}(T_{5, q}) = 1,$}
        \item \hyperref[5qthm]{If $q \equiv 4,6,9 \pmod{10}$, then $\gamma_{4}(T_{5, q}) = 2.$}
        \item \hyperref[5qthm]{If $q \equiv 1 \pmod{10}$, then $\gamma_{4}(T_{5, q}) \in \{1,2\}.$}
    \end{enumerate}
\end{thm}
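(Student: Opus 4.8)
The plan is to prove each of the three statements by pairing a lower bound, obtained from an obstruction, with an upper bound, obtained from an explicit non-orientable surface in $B^4$; the work therefore splits into a ``construction half'' and an ``obstruction half.''

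For the upper bounds I would use non-coherent band moves, recalling that $\gamma_4(K)\le n$ precisely when $K$ is obtained from the unknot by $n$ band moves (equivalently, $K=\partial F$ where $F$ is built from a disk by attaching $n$ bands, at least one of them making $F$ non-orientable). For part (i), when $q\equiv\pm 2\pmod 5$, I would exhibit a single non-coherent band move from $T_{5,q}$ to the unknot, working on the standard torus picture (or the closed $5$-braid $(\sigma_1\sigma_2\sigma_3\sigma_4)^{q}$); the congruence $q\equiv\pm 2\pmod 5$ is exactly the condition under which one band cancels the knotting, generalizing the $5$-half-twisted M\"obius band bounding $T_{2,5}=T_{5,2}$ and the band constructions for $T_{3,q}$ and $T_{4,q}$ in \cite{Allen,torus1,torus2}. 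Combined with the fact that a nontrivial torus knot is not slice (so $\gamma_4\ge 1$), this gives $\gamma_4(T_{5,q})=1$. For parts (ii) and (iii) I would then produce a non-orientable surface of genus $2$ bounding $T_{5,q}$: most economically, one non-coherent band move taking $T_{5,q}$ to some $T_{5,q'}$ with $q'\equiv\pm 2\pmod 5$, capped off by the M\"obius band from part (i), or else two bands directly to the unknot. This yields $\gamma_4(T_{5,q})\le 2$ for the remaining residues.

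For the lower bounds I would pass to the double branched cover $\Sigma_2(T_{5,q})=\Sigma(2,5,q)$. If $T_{5,q}$ bounded a M\"obius band $F\subset B^4$, then the double cover of $B^4$ branched over $F$ would be a $4$-manifold $W$ with $\partial W=\Sigma_2(T_{5,q})$, $b_2(W)=1$, and intersection form $\langle m\rangle$ where $|m|=\det(T_{5,q})$; in particular the linking form of $\Sigma_2(T_{5,q})$ would have to be presented by the $1\times 1$ matrix $(m)$, forcing it into the square class of $-1/m$. I would compute $\det(T_{5,q})$ --- it is $1$ for $q$ odd and $5$ for $q$ even --- and the linking form of $\Sigma(2,5,q)$ as a function of $q\bmod 10$. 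When $q\equiv 4,6\pmod{10}$ the linking form on $\mathbb Z/5$ lands in the class not carried by $(\pm 5)$, giving a contradiction and hence $\gamma_4(T_{5,q})\ge 2$. When $q\equiv 9\pmod{10}$ we have $\det=1$ and the linking form is vacuous, so I would instead invoke the $d$-invariant bound, feeding in the closed formula for $d(S^3_{-1}(T_{5,q}))$ stated above together with $\sigma(T_{5,q})$ to rule out genus $1$. The residue class $q\equiv 1\pmod{10}$ in part (iii) is precisely the one where both obstructions are silent (trivial linking form, and the $d$-invariant/signature bound only giving $\gamma_4\ge 1$), so one obtains only $\gamma_4(T_{5,q})\in\{1,2\}$, which matches the upper bound.

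I expect the main obstacle to be twofold. On the construction side, it is finding and rigorously checking the explicit band moves --- above all the single band realizing the M\"obius band in part (i) simultaneously for every $q\equiv\pm 2\pmod 5$, which requires carefully tracking the braid word through the move. On the obstruction side, it is determining the linking form of the Brieskorn sphere $\Sigma(2,5,q)$ in terms of $q\bmod 10$ and identifying which linking-form class is incompatible with $b_2(W)=1$, and, for $q\equiv 9\pmod{10}$, carrying out the $d$-invariant estimate --- here the point is that $d(S^3_{-1}(T_{5,q}))$ and $\sigma(T_{5,q})$ individually grow with $q$, and one must show that their combination in the bound leaves exactly $2$.
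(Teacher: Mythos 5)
Your overall architecture (band moves for upper bounds, double-branched-cover and Heegaard Floer obstructions for lower bounds) is the same as the paper's, though the paper mostly assembles quoted results: part (i) is exactly the Jabuka--Van Cott theorem that $T_{pk\pm2,p}$ bounds a M\"obius band for odd $p$, part (iii) and the upper bounds in (ii) come from the pinch-move count $\vartheta(T_{5,5k\pm1})=2$, the case $q\equiv4\pmod{10}$ uses the $\sigma+4\arf\equiv4\pmod 8$ obstruction, and the cases $q\equiv6,9\pmod{10}$ are read off from Theorem~1.6 of Binns--Kang--Simone--Tru\"ol--Sabloff. Your linking-form computation for $q\equiv4,6\pmod{10}$ does work ($-\tfrac{q}{10}n^2\not\equiv\pm\tfrac15\pmod 1$ for any $n$ in those residue classes) and is a legitimate alternative to the paper's arguments there.

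However, there is a genuine gap in the case $q\equiv9\pmod{10}$. You propose to rule out genus $1$ with Batson's bound $\tfrac{\sigma(K)}{2}-d(S^3_{-1}(K))\le\gamma_4(K)$, but this bound evaluates to $0$ for $q=10k+9$: one has $\sigma(5,10k+9)=24k+24$ and $d(5,10k+9)=12k+12$, so $\tfrac{\sigma}{2}-d=0$ (the paper records exactly this in its table of $\tfrac{\sigma(5,q)}{2}-d(5,q)$, and notes that the $d$-invariant gives no new information for $p=5$). Since $q$ is odd here, $\det(T_{5,q})=1$ and the linking form is vacuous, as you observe, so your proposal has no working obstruction left for this residue class and the claim $\gamma_4(T_{5,10k+9})=2$ is unproved. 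The obstruction that actually succeeds is the unoriented upsilon bound $\left|\upsilon(K)+\tfrac{\sigma(K)}{2}\right|\le\gamma_4(K)$ of Ozsv\'ath--Stipsicz--Szab\'o: one computes $\upsilon(5,10k+9)=-12k-10$, giving $\left|\upsilon+\tfrac{\sigma}{2}\right|=2$ (this is also what underlies the quoted theorem from \cite{torus1}). A smaller issue: your ``precisely when'' characterization of $\gamma_4(K)\le n$ via $n$ band moves from the unknot is only correct in the direction you actually use (band moves give an upper bound); the converse is not needed and should not be asserted.
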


\textbf{Acknowledgements.} This material is based upon work supported by the National Science Foundation under Award No.~2231492 and Award No.~1907654. We thank Shea Vela-Vick and Angela Wu for helpful conversations.

\section{BACKGROUND}

To begin, we review knot invariants and examine the lower bounds on the non-orientable 4 genus coming from knot invariants. It should be noted that the smallest non-orientable 4 genus for a knot $K$ in $S^{3}$ is $\gamma_{4} (K) = 1$ meaning $K$ bounds a M\"obius band. We focus on torus knots $T_{p,q}$ in our paper, which are knots which smoothly embed on the torus, twisting $p$ times around the meridian and $q$ times around the longitude. Note that $T_{p, q} = T_{q, p}$.

\subsection{Knot Invariants}

The signature of a knot $\sigma (K)$ is defined to be the signature of the knot's Seifert matrix added to its transpose, $\sigma (V + V^{t})$ \cite{knotinfo}. Gordan, Litherland, and Murasugi provided a recursive formula for calculating the signature of torus knots, which we restate here:

\begin{thm}[Theorem 5.2 in \cite{sigma}]\label{sig}
		Let $p,q>0$. Note $\sigma(p, q) = \sigma(T_{-p, q}) = - \sigma(T_{p, q})$.
		\begin{enumerate}[(i)]
			\item If $2q<p$:
			\begin{itemize}
				\item If $q$ is odd, $\sigma(p,q)=\sigma(p-2q,q)+q^2-1$,
				\item if $q$ is even, $\sigma(p,q)=\sigma(p-2q,q)+q^2$.
			\end{itemize}
			\item If $q\le p \le 2q$:
			\begin{itemize}
				\item If $q$ is odd, $\sigma(p,q)=-\sigma(2q-p,q)+q^2-1$,
				\item if $q$ is even, $\sigma(p,q)=-\sigma(2q-p,q)+q^2-2$.
			\end{itemize}
				\item $\sigma(2q,q)=q^2$, $\sigma(p,1)=0$, $\sigma(p,2)=p-1$, $\sigma(p,q)=\sigma (q,p)$.
		\end{enumerate}
	\end{thm}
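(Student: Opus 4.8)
The plan is to reduce the entire recursion to a single closed-form lattice-point count for the signature, and then to verify each case by an elementary ceiling-function manipulation. Concretely, I would take as the main input the Brieskorn--Hirzebruch lattice formula, which for coprime $p,q>0$ expresses the (negated) signature as
\[
\sigma(p,q) = (p-1)(q-1) - 4\,M(p,q), \qquad M(p,q) = \#\Big\{(i,j) : 1\le i\le p-1,\ 1\le j\le q-1,\ \tfrac{i}{p}+\tfrac{j}{q} < \tfrac12\Big\}.
\]
This counts the lattice points of the box lying strictly below the line $\frac{i}{p}+\frac{j}{q}=\frac12$, and can be established either by citing Brieskorn--Hirzebruch--Zagier or, for self-containedness, by identifying $T_{p,q}$ as the link of the singularity $x^p+y^q$ and reading the signature off the jumps of the Tristram--Levine function at the Alexander roots $e^{2\pi i(i/p+j/q)}$. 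A small but essential preliminary is that no lattice point of the box lies exactly on the line: the equation $2iq+2jp=pq$ together with $\gcd(p,q)=1$ forces $p\mid 2i$ and $q\mid 2j$, which is impossible in the given range since at least one of $p,q$ is odd. Hence the formula carries no boundary ambiguity.

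For case (i), where $2q<p$, I would compare $M(p,q)$ with $M(p-2q,q)$. Writing $a_j=\frac{q-2j}{2q}$, the number of admissible $i$ at height $j$ equals $\lceil p\,a_j\rceil-1$, and since $(p-2q)a_j = p\,a_j-(q-2j)$ differs from $p\,a_j$ by the integer $q-2j$, the ceilings differ by exactly $q-2j$. Summing over $1\le j\le\lfloor\frac{q-1}{2}\rfloor$ gives $M(p,q)-M(p-2q,q)=\sum_j (q-2j)$, a triangular sum equal to $\left(\frac{q-1}{2}\right)^2$ when $q$ is odd and $\frac{q}{2}\big(\frac{q}{2}-1\big)$ when $q$ is even. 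Substituting into the closed formula and simplifying yields the claimed increments $q^2-1$ and $q^2$. This step needs no coprimality caveat, since we only subtract an integer inside the ceiling.

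For case (ii), where $q\le p\le 2q$, the relevant comparison is between $M(p,q)$ and $M(2q-p,q)$, and here the reflection $p\mapsto 2q-p$ is precisely what produces the sign change in the recursion. At height $j$ one has $p\,a_j+(2q-p)a_j = q-2j\in\Z$, so the identity $\lceil x\rceil+\lceil n-x\rceil=n+1$ (valid for $x\notin\Z$, $n\in\Z$) gives $M(p,q)+M(2q-p,q)=\sum_j(q-2j-1)$; the hypothesis $p\,a_j\notin\Z$ again follows from $\gcd(p,q)=1$ together with the parity of $p,q$. Evaluating the sum ($\frac{q-1}{2}\cdot\frac{q-3}{2}$ for $q$ odd, $(\frac{q}{2}-1)^2$ for $q$ even) and feeding it through $\sigma(p,q)+\sigma(2q-p,q)=2(q-1)^2-4\big(M(p,q)+M(2q-p,q)\big)$ returns the constants $q^2-1$ and $q^2-2$, matching the $-\sigma(2q-p,q)$ form of the statement. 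The remaining base cases are direct: $M(p,1)=0$ gives $\sigma(p,1)=0$, the single-row count gives $\sigma(p,2)=p-1$, and $\sigma(q,p)=\sigma(p,q)$ is the manifest symmetry of $M$ under swapping the two coordinates.

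The genuinely substantive part --- and the main obstacle --- is not the combinatorics above, which is essentially bookkeeping, but securing the closed lattice formula itself and correctly treating the non-knot base case $\sigma(2q,q)=q^2$. The identity $\sigma(p,q)=(p-1)(q-1)-4M(p,q)$ is what converts the whole problem into arithmetic, and its proof rests on the fibered/Milnor-fiber description of torus knots and the identification of the eigenvalue positions with $\frac{i}{p}+\frac{j}{q}$, so I would either cite this or devote the bulk of the write-up to establishing it. The entry $\sigma(2q,q)=q^2$ concerns a torus \emph{link} ($\gcd=q$), for which the count $M$ and the box argument must be re-set up, or computed separately from the Seifert form; I expect this to demand the most care, since the ``no point on the boundary line'' and integrality inputs used throughout are exactly the features that fail when $p$ and $q$ share a common factor.
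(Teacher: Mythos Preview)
The paper does not prove this statement at all: it is quoted verbatim as Theorem~5.2 of Gordon--Litherland--Murasugi and used as a black box to compute $\sigma(5,q)$ and $\sigma(6,q)$ in Proposition~\ref{sigcalc}. So there is no ``paper's own proof'' to compare against; any proof you give is already more than the paper provides.

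That said, your approach is sound and the arithmetic checks. Reducing everything to the Brieskorn--Hirzebruch count $\sigma(p,q)=(p-1)(q-1)-4M(p,q)$ and then handling (i) by the integer shift $\lceil p a_j\rceil-\lceil (p-2q)a_j\rceil=q-2j$ and (ii) by the reflection identity $\lceil x\rceil+\lceil n-x\rceil=n+1$ is clean and genuinely different from the original Gordon--Litherland--Murasugi argument, which proceeds via the Goeritz form and an inductive analysis of diagrams rather than a closed lattice formula. Your route has the advantage that both recursions fall out of one-line ceiling manipulations once the lattice formula is in hand; the price, as you correctly identify, is that the lattice formula itself carries all the topological content, and the torus-\emph{link} base case $\sigma(2q,q)=q^2$ sits outside the coprime framework and must be handled separately. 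Your verification that $p a_j\notin\Z$ in case (ii) is the one place where a reader might want an extra line, but the case split on parities you sketch does cover it.
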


 The Arf invariant of a knot is denoted $\arf(K)$ and is a concordance invariant in $\mathbb{Z}_{2}$ and calculated using the Seifert form of a knot \cite{knotinfo}. For torus knots, the Arf invariant is straightforward. 
 
 \begin{rem}[Remark 2.2 in \cite{torus1}]\label{ArfRem}
		For odd $q$, $\arf(T(p,q))=\begin{cases}0& p\text{ odd or } q\equiv \pm 1 \pmod{8},\\ 
  1&p\text{ even and } q\equiv \pm 3 \pmod{8}.\end{cases}$
\end{rem}

The following proposition from Yasuhara \cite{yasuhara} was proved and used in the work of Gilmer and Livingston \cite{GL}, and seen in application by Jabuka and Kelly \cite{JK}, Ghanbarian \cite{N10}, and the first author \cite{fairchild2024non}. 

\begin{prop}[Proposition 5.1 in \cite{yasuhara}]\label{sigArf}
    Given a knot $K$ in $S^{3}$, if $\sigma(K) + 4  \arf(K) \equiv 4 \pmod{8} $, then $\gamma_{4}(K) \geq 2$.
\end{prop}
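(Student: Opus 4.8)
The plan is to prove the contrapositive: assuming $\gamma_4(K)=1$, so that $K$ bounds a M\"obius band $F$ properly embedded in $B^4$, I will show that $\sigma(K)+4\arf(K)\not\equiv 4\pmod 8$. (If $\gamma_4(K)=0$ then $K$ is slice and $\sigma(K)=\arf(K)=0$, so the conclusion is immediate.) Isotope $F$ into the interior of $B^4$ so that $\partial F=K$ carries its $0$-framing, and write $e=e(F)$ for the resulting normal Euler number. The strategy — this is Proposition~5.1 of \cite{yasuhara}, reproved by Gilmer--Livingston \cite{GL} — is to produce two congruences for $e$, one from a branched cover and one from the Guillou--Marin theorem, and to compare them.

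First I would feed $F$ into a double branched cover. Let $W$ be the double cover of $B^4$ branched over $F$. Then $\partial W$ is the double branched cover of $S^3$ along $K$, which is a rational homology sphere because $\det(K)$ is odd, and $b_2(W)=b_1(F)=1$. So the intersection form of $W$ is a nondegenerate rank-one form, $\sigma(W)=\epsilon_1\in\{+1,-1\}$, and the Gordon--Litherland signature formula, in its form for a spanning surface pushed into $B^4$, gives $\sigma(W)=\sigma(K)-\tfrac12 e$; hence $e=2\sigma(K)-2\epsilon_1$.

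Next I would cap $F$ off and apply Guillou--Marin. Choose a Seifert surface $S\subset S^3$ for $K$ and set $\widehat F=F\cup_K S$, a closed non-orientable surface lying in $B^4\subset S^4$; since the $0$-framings agree along $K$ and $S$ contributes $0$ to the normal Euler class, $\widehat F$ has self-intersection $e$ in $S^4$, and $[\widehat F]=0$ is characteristic there. The Guillou--Marin congruence then reads $\sigma(S^4)\equiv [\widehat F]^2+2\beta(\widehat F)\pmod{16}$, i.e.\ $e\equiv -2\beta(\widehat F)\pmod{16}$, where $\beta$ is the Brown invariant of the induced $\mathbb{Z}/4$-valued quadratic form on $H_1(\widehat F;\mathbb{Z}/2)$. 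A Mayer--Vietoris computation shows $H_1(\widehat F;\mathbb{Z}/2)$ is the orthogonal sum of the line spanned by the core $c$ of the M\"obius band and the image of $H_1(S;\mathbb{Z}/2)$, so $\beta$ splits additively: on $\langle c\rangle$ the form is $\langle q(c)\rangle$ with $q(c)$ odd (since $c$ is one-sided), contributing $\epsilon_2\in\{+1,-1\}$, while on the Seifert summand the form is even with Arf invariant $\arf(K)$, contributing $4\arf(K)$. Thus $e\equiv -2\epsilon_2-8\arf(K)\equiv -2\epsilon_2+8\arf(K)\pmod{16}$.

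Comparing the two expressions for $e$ gives $2\sigma(K)-2\epsilon_1\equiv -2\epsilon_2+8\arf(K)\pmod{16}$, hence $\sigma(K)+4\arf(K)\equiv \epsilon_1-\epsilon_2\pmod 8$, which lies in $\{-2,0,2\}$ and is in particular never $\equiv 4\pmod 8$; this is the contrapositive. (The sign conventions in the two input formulas only affect the signs of $\epsilon_1,\epsilon_2$, which are already unconstrained, so they do not matter.) I expect the main obstacle to be the third step: one must arrange the normal Euler number, the branched-cover signature formula, and the Guillou--Marin congruence with mutually consistent orientation and framing conventions, and in particular must correctly identify the Brown invariant of the quadratic form carried by the Seifert surface as $4\arf(K)$ — the classical bridge between the Guillou--Marin/Rokhlin quadratic form and the Arf invariant of the knot.
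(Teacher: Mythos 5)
The paper does not prove this proposition; it is imported verbatim as Proposition~5.1 of Yasuhara \cite{yasuhara} (with the proof also appearing in Gilmer--Livingston \cite{GL}). Your argument --- comparing the normal Euler number of the M\"obius band computed via the signature of the double branched cover of $B^4$ with the Guillou--Marin congruence for the capped-off characteristic surface in $S^4$, and identifying the Brown invariant of the Seifert-surface summand with $4\arf(K)$ --- is exactly the proof in those references and is correct.
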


The Upsilon invariant of a knot $K$, $\Upsilon_{K}(t)$ for $t \in [0, 2]$, is a powerful tool from knot Floer homology. We will not discuss knot Floer homology, but the interested reader is referred to the overview by Ozv\'ath and Szab\'o \cite{hfkBackground}. We denote $\upsilon(K) = \Upsilon_{K}(1)$ and note this gives the Upsilon invariant in the non-orientable case. Specifically, unoriented knot Floer homology is defined and explored in detail by Ozsv{\'a}th, Stipsicz, and Szab{\'o} in \cite{OS2}. For torus knots, we will denote $\upsilon(T_{p, q})$ as $\upsilon(p, q)$. Ozsv{\'a}th, Stipsicz, and Szab{\'o} give us a recursive formula to calculate the generic upsilon function for torus knots in~\cite{OS1}, which we restate here for the specific case of $t=1$.

\begin{prop}[Proposition 6.3 in~\cite{OS1}]
For $T_{p,q}$ and $i\in\Z$ such that $1 \in \left[ \dfrac{2i}{p}, \dfrac{2i + 1}{p} \right]$,
\begin{equation}
    \upsilon(p, p+1) = -i(i+1) - \frac{1}{2}p(p-1-2i).
\end{equation}
\end{prop}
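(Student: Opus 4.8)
The plan is to read off the claimed value as the $t=1$ specialization of \cite[Proposition 6.3]{OS1}, where $\Upsilon_{T_{p,p+1}}(t)$ is computed as a piecewise-linear function of $t\in[0,2]$; on the interval $\left[\tfrac{2i}{p},\tfrac{2i+1}{p}\right]$ that proposition gives $\Upsilon_{T_{p,p+1}}(t)=-i(i+1)-\tfrac{t}{2}\,p(p-1-2i)$, and setting $t=1$ produces exactly the formula in the statement. The one point that needs attention before invoking this is that the hypothesis determines $i$: the condition $1\in\left[\tfrac{2i}{p},\tfrac{2i+1}{p}\right]$ is equivalent to $p-1\le 2i\le p$, which forces $i=\lfloor p/2\rfloor$ (so $2i=p-1$ if $p$ is odd and $2i=p$ if $p$ is even). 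Substituting these two cases back in, the proposition is equivalent to the closed forms $\upsilon(p,p+1)=-\tfrac{p^{2}-1}{4}$ for $p$ odd and $\upsilon(p,p+1)=-\tfrac{p^{2}}{4}$ for $p$ even; I would record these for the later bound computations and check the small cases $p=2,3,4$, which give $-1,-2,-4$ and match the known values of $\upsilon$ for the trefoil $T_{2,3}$, for $T_{3,4}$, and for $T_{4,5}$.

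If one prefers to re-derive the $t=1$ value rather than cite \cite{OS1} directly, the route I would take goes through the staircase complex. Since $T_{p,p+1}$ is an $L$-space knot, $CFK^{\infty}(T_{p,p+1})$ is filtered chain homotopy equivalent to the staircase complex determined by its Alexander polynomial, equivalently by the numerical semigroup $S=\langle p,p+1\rangle$. That semigroup is very regular: it meets each interval $[kp,(k+1)p)$ in $\{kp,kp+1,\dots,kp+\min(k,p-1)\}$, so for $1\le k\le p-2$ exactly $p-1-k$ integers are omitted and for $k\ge p-1$ none are, and $\sum_{k=0}^{p-2}(p-1-k)=\binom{p}{2}$ recovers the genus. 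Feeding the resulting step lengths into the \cite{OS1} description of the Upsilon function of a staircase, one gets that $\Upsilon_{T_{p,p+1}}$ is continuous and piecewise linear with breakpoints exactly at $t=\tfrac{2i}{p}$, with $\Upsilon(0)=0$, with initial slope $-\binom{p}{2}=-\tau(T_{p,p+1})$, and with the slope increasing by $p$ at each breakpoint. Granting this, $\upsilon(p,p+1)=\Upsilon_{T_{p,p+1}}(1)$ is just the signed area under this step function of slopes over $[0,1]$ — a short arithmetic sum over the segments $\left[\tfrac{2i}{p},\tfrac{2(i+1)}{p}\right]$ lying in $[0,1]$ (with the last one truncated at $1$ when $p$ is odd) — and evaluating it reproduces the two closed forms above.

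I expect the main obstacle to be the bookkeeping in that derivation: turning the semigroup gap count for $\langle p,p+1\rangle$ into the bifiltration coordinates of the staircase generators, and then correctly identifying which generator is extremal on each linear segment so that the ``breakpoints at $\tfrac{2i}{p}$, slope jumps by $p$'' description is fully justified. This is elementary but easy to slip on, so the cleanest organization is to verify the slope function only at the breakpoints $0,\tfrac{2}{p},\tfrac{4}{p},\dots,1$ and then use linearity on the last segment to extract the value at $t=1$. In any case, since the identity is literally the $t=1$ slice of \cite[Proposition 6.3]{OS1}, the most economical proof — and the one the statement implicitly uses — is simply to invoke that proposition and substitute $t=1$, after noting $i=\lfloor p/2\rfloor$.
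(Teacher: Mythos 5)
Your primary route is exactly what the paper does: Proposition 2.4 is stated without proof as the $t=1$ specialization of Proposition 6.3 of \cite{OS1}, so invoking that result and substituting $t=1$ is the intended (and only) argument. Your added observations — that the hypothesis forces $i=\lfloor p/2\rfloor$, yielding the closed forms $-\tfrac{p^2-1}{4}$ ($p$ odd) and $-\tfrac{p^2}{4}$ ($p$ even) — are correct and consistent with the paper's later values such as $\upsilon(6,5)=-6$ and $\upsilon(6,7)=-9$; the staircase derivation you sketch is a reasonable independent check but is not needed here.
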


We also have the following recursive equation from Feller and Krcatovich,
\begin{prop}[Proposition 2.2 in~\cite{FK}] For $p<q$,
\begin{equation}\label{upsilonrecursion}
    \upsilon(p, q) = \upsilon(p, q-p) + \upsilon(p, p+1).
\end{equation} 
\end{prop}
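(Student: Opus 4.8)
The plan is to prove this via the staircase structure of the knot Floer complexes of torus knots, in the spirit of Ozsv{\'a}th--Stipsicz--Szab{\'o} and of Feller and Krcatovich: the identity is algebraic rather than four-dimensional in nature, and the argument combines the behaviour of $\Upsilon$ under connected sum and cobordism with the fact that $T_{p,q}$, $T_{p,q-p}$, and $T_{p,p+1}$ are all $L$-space knots, so that their knot Floer complexes are staircase complexes governed by the numerical semigroups $\langle p,q\rangle$, $\langle p,q-p\rangle$, and $\langle p,p+1\rangle$.

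First I would set up the geometric comparison. Realizing $T_{p,q}$ as the closure of the positive braid $(\sigma_1\cdots\sigma_{p-1})^q$, it is obtained from the closure of $(\sigma_1\cdots\sigma_{p-1})^{q-p}$ --- that is, from $T_{p,q-p}$ --- by inserting one full twist $\Delta^2=(\sigma_1\cdots\sigma_{p-1})^p$ on $p$ strands. Removing this full twist by oriented band moves yields a cobordism from $T_{p,q}$ to $T_{p,q-p}$ of genus $\binom{p}{2}$, and since the Milnor conjecture gives $g_4(T_{p,q})-g_4(T_{p,q-p})=\tfrac12(p-1)\big((q-1)-(q-p-1)\big)=\tfrac{p(p-1)}{2}=g_4(T_{p,p+1})$, this cobordism is genus-minimizing. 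Feeding it into the cobordism inequality for $\Upsilon$ gives $|\Upsilon_{T_{p,q}}(t)-\Upsilon_{T_{p,q-p}}(t)|\le\binom{p}{2}\,t$ for $t\in[0,1]$; this has the right shape but is only an inequality.

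To upgrade to the claimed equality I would pass to the staircase complexes. The step sequence of the staircase of an $L$-space torus knot is read off from the runs of elements present in and absent from its semigroup below twice its genus, and an induction --- stripping off one full-twist block at a time, exactly as in the braid picture above --- shows that the step sequence of $T_{p,q}$ is the concatenation of those of $T_{p,q-p}$ and $T_{p,p+1}$. The closed formula for $\Upsilon$ of a staircase is additive under concatenation of step sequences, and evaluating at $t=1$ gives $\upsilon(p,q)=\upsilon(p,q-p)+\upsilon(p,p+1)$. Equivalently --- and more economically if one only wants $t=1$ --- one can unwind Ozsv{\'a}th--Stipsicz--Szab{\'o}'s recursion for $\Upsilon_{T_{p,q}}$, whose reduction step is precisely $q\mapsto q-p$ with correction term $\Upsilon_{T_{p,p+1}}$ and whose base value is the closed form for $\upsilon(p,p+1)$ quoted above.

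I expect the main obstacle to be exactly this upgrade from inequality to equality. A genus-$\binom{p}{2}$ cobordism only confines $\Upsilon_{T_{p,q}}-\Upsilon_{T_{p,q-p}}$ to within $\pm\binom{p}{2}\,t$, whereas for $p\ge 3$ one has $|\upsilon(p,p+1)|<\binom{p}{2}$ strictly, so the real content lies in the combinatorics of the staircases and not in any four-dimensional estimate. One must also take the concatenation in the right order and check that the staircase $\Upsilon$-formula is genuinely additive under it --- which it is, even though the complexes $CFK^\infty(T_{p,q})$ and $CFK^\infty(T_{p,q-p})\otimes CFK^\infty(T_{p,p+1})$ need not be locally equivalent.
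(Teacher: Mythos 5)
This proposition is quoted from Feller--Krcatovich \cite{FK}; the paper offers no proof of it, so there is nothing internal to compare against and your argument must stand on its own. It does not, because its central step is false. The cobordism half is fine but, as you concede, only yields $|\Upsilon_{T_{p,q}}(t)-\Upsilon_{T_{p,q-p}}(t)|\le\binom{p}{2}\,t$. The upgrade to equality rests entirely on the claim that the step sequence of the staircase of $T_{p,q}$ is the concatenation of the step sequences of $T_{p,q-p}$ and $T_{p,p+1}$, and for $p\ge 3$ that is simply not true. Take $p=3$, $q=7$: from the semigroup $\langle 3,4\rangle=\{0,3,4,6,\dots\}$ the (unnormalized) Alexander polynomial of $T_{3,4}$ is $1-t+t^3-t^5+t^6$, giving step sequence $(1,2,2,1)$, whose self-concatenation is $(1,2,2,1,1,2,2,1)$; from $\langle 3,7\rangle=\{0,3,6,7,9,10,12,\dots\}$ one gets $1-t+t^3-t^4+t^6-t^8+t^9-t^{11}+t^{12}$ for $T_{3,7}$, i.e.\ step sequence $(1,2,1,2,2,1,2,1)$. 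These disagree already in the third entry, yet $\Upsilon_{T_{3,7}}=2\Upsilon_{T_{3,4}}$ still holds --- which shows the identity cannot be seen at the level of literal staircase concatenation. Your fallback, ``unwind Ozsv\'ath--Stipsicz--Szab\'o's recursion, whose reduction step is $q\mapsto q-p$ with correction term $\Upsilon_{T_{p,p+1}}$,'' is circular: that recursion \emph{is} the statement being proved (their Proposition 6.3 only supplies the closed form for $\Upsilon_{T_{p,p+1}}$).

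The combinatorial input that actually makes the argument work is weaker than concatenation and is what \cite{FK} establishes: writing $S_{p,q}=\{s_0<s_1<s_2<\cdots\}$ for the semigroup $\langle p,q\rangle$, one has a convolution-type identity
\[
s_j(S_{p,q})=\max_{j_1+j_2=j}\bigl(s_{j_1}(S_{p,q-p})+s_{j_2}(S_{p,p+1})\bigr),
\]
which you can verify on the example above (e.g.\ $j=2$: $\max\{0+4,\,3+3,\,4+0\}=6=s_2(S_{3,7})$ even though $s_2(S_{3,4})=4$). Combined with the expression of $\Upsilon$ of an $L$-space knot as a Legendre-type transform of $j\mapsto s_j$, this max-convolution becomes addition of $\Upsilon$-functions, and evaluating at $t=1$ gives the proposition. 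If you replace your concatenation claim with this identity (and prove it), the rest of your outline goes through; as written, the proof has a genuine gap.
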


From the signature and upsilon invariant, we get the following lower bound on $\gamma_4$.

\begin{thm}[Theorem 1.2 in \cite{OS2}]\label{UpsSigBound}
  For a knot $K$ in $S^{3}$, 
    $$\left| \upsilon(K) + \frac{\sigma(K)}{2} \right| \leq \gamma_{4} (K)$$

\end{thm}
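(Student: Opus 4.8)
\emph{Proof strategy.} Both $\upsilon$ and $\sigma$ change sign under mirror reflection and $\gamma_4(\overline K)=\gamma_4(K)$, so it suffices to prove the one–sided bound $\upsilon(K)+\tfrac12\sigma(K)\le\gamma_4(K)$ and then apply it to the mirror $\overline K$. Put $n=\gamma_4(K)$ and choose a properly embedded non-orientable surface $F\subset B^4$ with $\partial F=K$ and $b_1(F)=n$. The plan is to read $F$ as a movie: one would isotope $F$ so that the radial Morse function on $B^4$ has a single index-$0$ critical point and $n$ index-$1$ critical points, merging any superfluous minima into the bands (a standard surface-handle manipulation). Since $\chi(F)=1-n$, this exhibits $K$ as the terminal term of a sequence of $n$ band moves $U=K_0,\ K_1,\ \dots,\ K_n=K$ starting from the unknot, in which the bands make $F$ non-orientable; one may arrange each move to be a single non-orientable band move (allowing intermediate link stages, for which $\upsilon$ and $\sigma$ remain defined, costs nothing).

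The crux is a one–move estimate: a single non-orientable band move changes the combined quantity $\upsilon+\tfrac12\sigma$ by at most $1$,
\[ \Bigl|\bigl(\upsilon(K_{i+1})+\tfrac12\sigma(K_{i+1})\bigr)-\bigl(\upsilon(K_i)+\tfrac12\sigma(K_i)\bigr)\Bigr|\le 1 . \]
For the signature one uses that a non-orientable band move amounts, on a Gordon--Litherland spanning surface, to attaching a single cross-cap, which perturbs the Gordon--Litherland form — hence $\sigma$ — by a controlled summand. For $\upsilon=\Upsilon_K(1)$ one uses the map on knot Floer homology induced by the elementary band cobordism, tracked through the knot Floer surgery/mapping–cone formula underlying the definition of $\upsilon$ in \cite{OS2}; this is the same exact-triangle mechanism that produces the crossing–change inequalities for $\Upsilon$. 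The delicate point is that the contributions to $\Delta\upsilon$ and to $\Delta\sigma$ are not independent: they are correlated so that their combination is bounded by $1$ rather than by the sum of the two separate bounds.

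Granting the one–move estimate, the theorem follows by telescoping. Since $\upsilon(U)=0$ and $\sigma(U)=0$,
\[ \Bigl|\upsilon(K)+\tfrac12\sigma(K)\Bigr| =\Bigl|\sum_{i=0}^{n-1}\Bigl[\bigl(\upsilon(K_{i+1})+\tfrac12\sigma(K_{i+1})\bigr)-\bigl(\upsilon(K_i)+\tfrac12\sigma(K_i)\bigr)\Bigr]\Bigr| \le n=\gamma_4(K), \]
and combining with the mirror bound yields $|\upsilon(K)+\tfrac12\sigma(K)|\le\gamma_4(K)$.

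The main obstacle is the one–move inequality, and within it the $\Upsilon$ component: one must extract from the surgery formula exactly how $\Upsilon_K(1)$ responds to a non-orientable band cobordism and verify that the response dovetails with the cross-cap change in the signature. An alternative that sidesteps band movies is the $4$-manifold route: a non-orientable $F$ with $b_1(F)=n$ and normal Euler number $e(F)$ can be blown up to present $K$ as the boundary of an orientable surface in a punctured $\#^{a}\mathbb{CP}^2\,\#^{b}\overline{\mathbb{CP}^2}$ with $a+b=n$ and $a-b$ determined by $e(F)$; applying the adjunction-type inequality for $\Upsilon$ over connected sums of $\pm\mathbb{CP}^2$ together with the classical identity relating $e(F)$ to $\sigma(K)$ then produces the bound, with the $\sigma(K)/2$ shift emerging from the bookkeeping of $a$, $b$, and $e(F)$.
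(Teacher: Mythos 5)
This statement is Theorem 1.2 of Ozsv\'ath--Stipsicz--Szab\'o \cite{OS2}; the paper you are reading only cites it and gives no proof, so there is nothing internal to compare against. Judged on its own, your proposal is a strategy outline rather than a proof, and the gap is located exactly where you admit it is: the ``one-move estimate'' $\left|\Delta\left(\upsilon+\tfrac{1}{2}\sigma\right)\right|\le 1$ under a non-orientable band move \emph{is} the theorem (applied to a M\"obius-band cobordism), and you assert it without proving it. The signature half is not ``a controlled summand'' in any soft sense --- a single pinch move can change $\sigma$ by a large amount (e.g.\ $T_{6,7}\to T_{4,5}$ changes $\sigma$ by $10$), and the $\upsilon$ half requires the full unoriented cobordism machinery of \cite{OS2} (the grading shift of the cobordism map on the $t=1$ specialization of $\mathfrak{tHFK}$, tracked against the normal Euler number). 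The miracle that these two large changes cancel is precisely what must be established, and nothing in your write-up establishes it; the telescoping step that follows is trivial once it is granted.

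There are two further unjustified reductions. First, you claim each of the $n$ saddles of a normal-form movie for $F$ ``may be arranged to be a single non-orientable band move'' with all intermediate stages knots; the normal form theorem does not let you choose the abstract handle decomposition realized by the radial Morse function, and in general the movie passes through multi-component links via fusion/fission saddles. Handling those requires either defining and controlling $\upsilon$ and $\sigma$ for links under oriented saddles (which you wave off as ``costs nothing'') or a different bookkeeping via the normal Euler number $e(F)$. Second, your alternative blow-up route is the right shape (it is essentially how \cite{OS2} and \cite{batson} organize such arguments), but the ``adjunction-type inequality for $\Upsilon$ over $\#^{a}\mathbb{CP}^2\,\#^{b}\overline{\mathbb{CP}^2}$'' that it invokes is again the hard analytic input, not something you derive. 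In short: the architecture matches the known proof, but every step carrying actual content is deferred to the very results being proved.
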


Another invariant which comes from knot Floer homology is the $d$-invariant, which is denoted $d(S^{3}_{-1}(K))$ for -1-surgery on a knot $K$. An interested reader is referred to \cite{OSref} in which Ozv\'ath and Szab\'o discuss the construction of this invariant and provide a method to compute it for torus knots using the Alexander polynomial. In \cite{batson}, Batson discusses how to use the $d$-invariant to create a lower bound for the non-orientable 4-genus. 

The Alexander polynomial of a knot is a Laurent polynomial and is denoted $\Delta_{K}(t)$. The formal definition using crossing relations and resolutions (Skein relations) can be found in many introduction to knot theory texts, but we refer our reader to \cite{alexpoly}. One important thing to note for this paper is the Alexander polynomial of a knot is symmetric in the sense that $\Delta_K(t)=\Delta_K(t^{-1})$.

 For ease of notation, we denote the Alexander polynomial of $T_{p,q}$ by $\Delta_{p,q}(t)$. The Alexander polynomial of a torus knot can be computed from the following equation \cite{batson}. 
\begin{equation}\label{symmAlexPoly}
    \Delta_{p, q} (t) = t^{-(p-1)(q-1)/2} \dfrac{(1-t)(1-t^{pq})}{(1 - t^{p})(1-t^{q})}
\end{equation}

Note that the $t^{-(p-1)(q-1)/2}$ term in equation \ref{symmAlexPoly} is a symmetrizing term. That is, it has no effect on the coefficients of the polynomial, but shifts the powers of $t$ so that $\Delta_{p,q}$ is symmetric. For ease of calculations, we will frequently consider $t^{(p-1)(q-1)/2}\Delta_{p, q}(t)$ instead of $\Delta_{p,q}(t)$.

We show in section 3 how to use the Alexander polynomial of torus knots to calculate the $d$-invariant of a knot $K$, denoted $d(S^3_{-1}(K))$, in order to use the following lower bound on $\gamma_4(T_{p,q})$ from Batson.
 \begin{thm}[Theorem 1 in~\cite{batson}]\label{boundsd}
 For a knot $K$,
 \begin{equation*}
		\dfrac{\sigma(K)}{2}-d(S^{3}_{-1}(K)) \le \gamma_4(K).
	\end{equation*}
\end{thm}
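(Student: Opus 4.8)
The plan is to deduce the inequality from the Ozsv\'ath--Szab\'o bound on $d$-invariants of boundaries of negative-definite four-manifolds, applied to a filling of $S^3_{-1}(K)$ built out of a minimal-genus non-orientable surface. Let $F\subset B^4$ be a properly embedded non-orientable surface with $\partial F=K$ and $b_1(F)=\gamma_4(K)=:k$, and write $e=e(F)\in\Z$ for its normal Euler number, computed relative to the $0$-framing of $K$.

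First I would assemble a four-manifold bounding $S^3_{-1}(K)$ from $F$: attach a $(-1)$-framed $2$-handle to $B^4$ along $K$ to get $X_0$ with $\partial X_0=S^3_{-1}(K)$, and cap $F$ off with the core of that handle to produce a closed non-orientable surface $\widehat F\subset X_0$ with normal Euler number $e-1$ and $[\widehat F]\neq 0$ in $H_2(X_0;\Z/2)$. Since $\widehat F$ is non-orientable it cannot be fed directly into a gauge-theoretic inequality, so next I would blow up $X_0$ at a controlled number $N$ of points and tube $\widehat F$ to the exceptional spheres, trading crosscaps for homology, so as to arrive at a negative-definite $X=X_0\#N\,\overline{\mathbb{CP}}{}^2$ with $\partial X=S^3_{-1}(K)$, $H_1(X;\Z)=0$, equipped with a spin$^c$ structure $\mathfrak s$ whose characteristic class is as large as the crosscap data allows. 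The crucial numerical input here is the Gordon--Litherland-type relation among $\sigma(K)$, the normal Euler number $e$, and the signature of the double branched cover of $(B^4,F)$: this is what forces $\sigma(K)$ into the bookkeeping and, after organizing the contributions, should yield $\tfrac14\bigl(c_1(\mathfrak s)^2+b_2(X)\bigr)\ \ge\ \tfrac{\sigma(K)}{2}-k$.

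Finally, since $X$ is negative definite with $H_1(X;\Z)=0$ and $S^3_{-1}(K)$ is an integer homology sphere, the Ozsv\'ath--Szab\'o inequality gives $\tfrac14\bigl(c_1(\mathfrak s)^2+b_2(X)\bigr)\le d(S^3_{-1}(K))$, and chaining this with the previous step yields $\tfrac{\sigma(K)}{2}-d(S^3_{-1}(K))\le k=\gamma_4(K)$, as desired. I expect the main obstacle to be the middle step: making the non-orientable capped surface usable for a genuinely \emph{negative-definite} filling while tracking $b_2(X)$ and the characteristic square exactly, and pinning down $e(F)$ via the signature relation --- this is the technical heart, and is where one must be most careful about orientation and framing conventions. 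As a fallback I would instead present $F$ as a disk together with $k$ band moves relating $K$ to the unknot and track the quantity $\tfrac{\sigma}{2}-d(S^3_{-1}(\,\cdot\,))$ through the resulting surgery cobordisms, showing each band changes it by at most $1$ and summing from the unknot.
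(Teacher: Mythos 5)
First, note that the paper does not prove this statement at all --- it is imported verbatim as Theorem~1 of Batson's paper --- so the only meaningful comparison is with Batson's original argument, and your sketch must stand on its own.

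Your overall strategy (a negative-definite filling of $S^3_{-1}(K)$ fed into the Ozsv\'ath--Szab\'o $d$-invariant inequality) is the right genre, but the construction you describe cannot produce the theorem, and the gap is exactly at the step you flag as the ``technical heart.'' The manifold $X = X_0 \# N\,\overline{\mathbb{CP}}{}^2$ you build has intersection form $\langle -1\rangle^{N+1}$: the $(-1)$-framed handle contributes one diagonal $\langle -1\rangle$ and each blow-up another, and tubing the surface to exceptional spheres changes the surface, not the ambient manifold. Every characteristic vector $c$ of the diagonal negative-definite form of rank $n$ has entries odd integers, so $c^2 + n = n - \sum c_i^2 \le 0$; hence the Ozsv\'ath--Szab\'o inequality applied to \emph{any} spin$^c$ structure on this $X$ yields only $0 \le 4\,d(S^3_{-1}(K))$, and no choice of ``crosscap data'' can make $\sigma(K)$ or $\gamma_4(K)$ appear. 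The signature enters the known proofs through the intersection form of the \emph{double branched cover} of $(B^4,F)$, where Gordon--Litherland identifies $\sigma(K) - e(F)/2$ with the signature of a rank-$b_1(F)$ form; but that manifold bounds $\Sigma_2(K)$, not $S^3_{-1}(K)$, and bridging those two boundaries is precisely the content of Batson's argument, which your outline does not address. Two further problems: tubing a non-orientable surface to embedded spheres does not remove crosscaps (the result is still non-orientable), and $e(F)$ is a free parameter not determined by $K$ and $\gamma_4(K)$, so ``pinning it down via the signature relation'' is circular as stated. Your fallback also fails: a minimal-genus non-orientable surface need not be presented by $\gamma_4(K)$ non-oriented band moves on a disk --- that presentation computes $\vartheta(K)$, which only satisfies $\gamma_4(K) \le \vartheta(K)$ --- and the per-band estimate on $\tfrac{\sigma}{2} - d$ would itself require the same four-manifold input, so it is not a more elementary route.
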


\subsection{Non-Oriented Band Moves}

To perform a band move on an oriented knot $K$, we first take an embedding $\varphi$ of an oriented band $B=[0,1]\times[0,1]$ into $S^3$ such that $B\cap K=\varphi([0,1]\times\{0,1\})$. We then create a new link $L=(K\cup\varphi({\{0,1\}\times[0,1]))\setminus\varphi([0,1]\times\{0,1\})}$. If the orientation of either $[0,1]\times\{0\}$ or $[0,1]\times\{1\}$ disagrees with $K$, then this is an non-oriented band move. Otherwise, it is an oriented band move. Figure~\ref{bandmovecompare} shows an example of an orientable and a non-oriented band move. Note that an orientable band move applied to a knot results in a 2-component link, while an non-oriented band move results in a knot. This procedure is sometimes referred to as a pinch move, such as in \cite{torus1}. The minimum number of non-oriented band moves required to transform $T_{p, q}$ into the unknot is denoted $ \vartheta(T_{p, q}).$

\begin{figure}[ht]
    \centering
    \includegraphics[width = 5in]{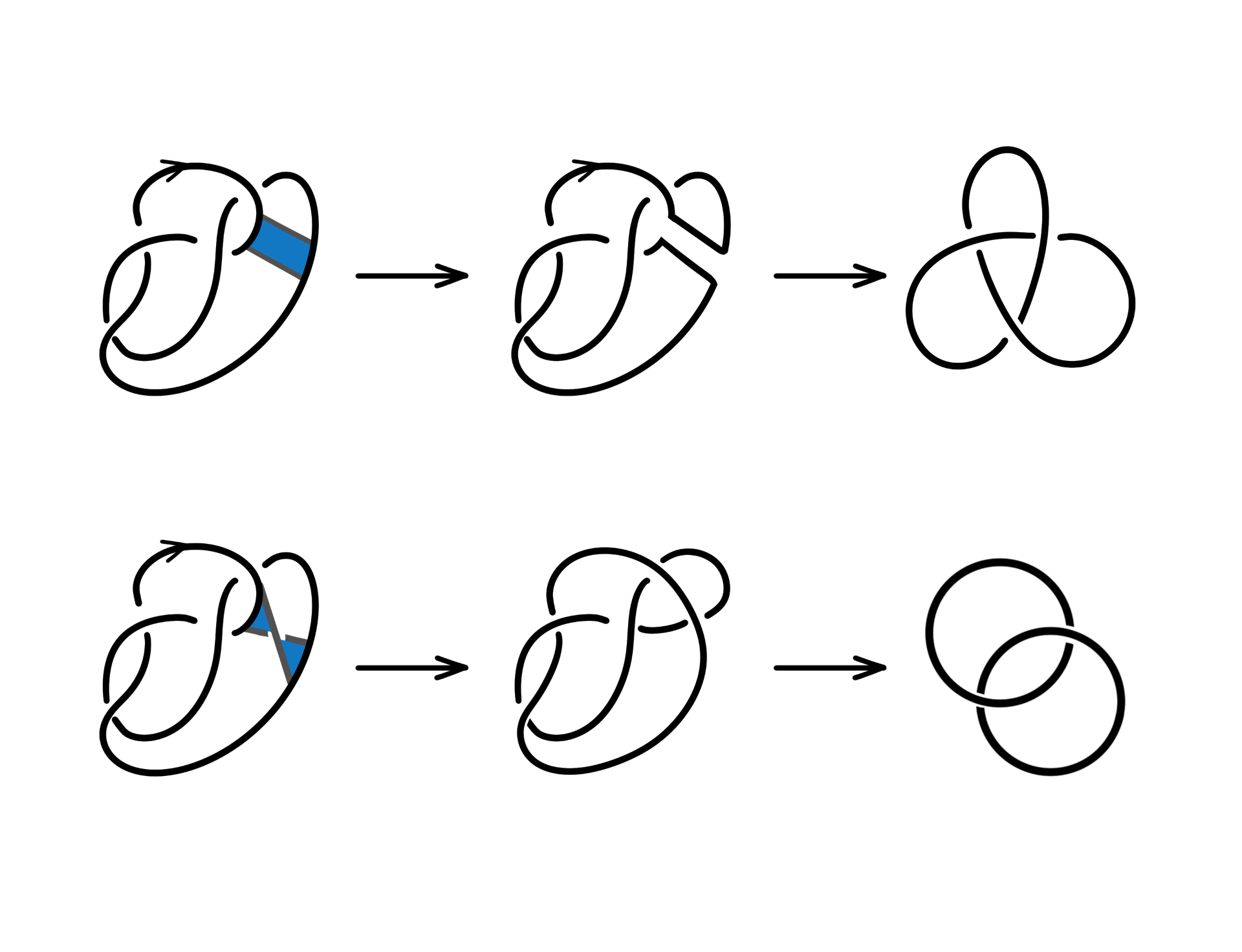}
    \caption{Top: Non-Oriented Band Move. Bottom: Oriented Band Move.}
    \label{bandmovecompare}
\end{figure}

\begin{thm}[Theorem 1.6 in \cite{torus1}]\label{pinchThm}

    Fix $p>3$. If $q \equiv p-1, $ $ p+1 $, or $2p+1 \pmod{2p} $, then $\vartheta(T_{p, q}) -1 \leq \gamma_{4} (T_{p, q}) \leq \vartheta (T_{p, q}) $. In particular, we have the following.
    \begin{enumerate}[(i)]
        \item If $p$ is odd: \\
        If $q \equiv p-1 \pmod{2p} $, then $\gamma_{4} (T_{p, q}) \in \{ \frac{p-3}{2}, \frac{p-1}{2} \}$, \\
        If $q \equiv p+1$ or $2p-1 \pmod{2p}$, then $\gamma_{4} (T_{p, q}) = \frac{p-1}{2}.$

        \item If $p$ is even: \\
        If $q > p$ and $q \equiv p-1, p+1 $ or $2p-1 \pmod{2p} $, then $ \gamma_{4}(T_{p, q}) \in \{ \frac{p-2}{2}, \frac{p}{2} \}.$
    \end{enumerate}
\end{thm}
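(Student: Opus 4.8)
The plan is to trap $\gamma_4(T_{p,q})$ between an upper bound coming from an explicit non-orientable spanning surface and a lower bound coming from the invariant estimates already assembled, and to observe that these agree to within one crosscap. For the upper bound I would first use the general inequality $\gamma_4(K)\le\vartheta(K)$, valid for every knot: stacking the traces of a shortest sequence of non-oriented band moves from $K$ to the unknot along a collar $S^3\times[0,1]\subset B^4$, and capping the terminal unknot by a disk in the complementary ball, produces a properly embedded non-orientable surface with boundary $K$ and first Betti number $\vartheta(K)$ — each trace being $K\times[0,1]$ with one band attached, hence connected, non-orientable, of Euler characteristic $-1$. It then remains to bound $\vartheta(T_{p,q})$ from above for $q$ in the listed residue classes modulo $2p$, which I would do by exhibiting an explicit family of pinch (non-oriented band) moves on the standard torus/braid presentation of $T_{p,q}$ that descends to the unknot, checking that every band is genuinely non-oriented (so that each intermediate stage is a knot) and that the Euler characteristics total to $\vartheta(T_{p,q})\le\lfloor p/2\rfloor$; for even $p$ the last band uses the observation that $T_{2,m}$, $m$ odd, bounds a M\"obius band in $S^3$ and hence lies one non-oriented band move from the unknot. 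This is the step where the congruence on $q$ is essential, and the bookkeeping here — exhibiting the right bands and controlling the total genus uniformly in $q$ — is where I expect the main difficulty to lie.

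For the lower bound I would evaluate Theorem~\ref{UpsSigBound}, Theorem~\ref{boundsd}, and Proposition~\ref{sigArf} on $T_{p,q}$, computing the invariants via the signature recursion (Theorem~\ref{sig}), the Arf values (Remark~\ref{ArfRem}), and the upsilon recursions. A key observation is that the linear-in-$q$ growth of $\tfrac12\sigma(p,q)$ is cancelled by that of $\upsilon(p,q)$ — since $\upsilon(p,p+1)=-\tfrac14(p^2-1)$ for odd $p$ and $-\tfrac14 p^2$ for even $p$ — so that $|\upsilon(p,q)+\tfrac12\sigma(p,q)|$ ultimately depends only on $q\bmod 2p$; evaluating it on the residues in the hypothesis, and using the Arf and $d$-invariant estimates where they are sharper, produces a closed-form lower bound $B$ for $\gamma_4(T_{p,q})$: namely $B=(p-3)/2$ when $q\equiv p-1\pmod{2p}$, $B=(p-1)/2$ when $q\equiv p+1$ or $2p-1\pmod{2p}$ (both with $p$ odd), and $B=(p-2)/2$ when $p$ is even.

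Putting the two halves together gives $B\le\gamma_4(T_{p,q})\le\vartheta(T_{p,q})\le\lfloor p/2\rfloor$; since $\lfloor p/2\rfloor$ equals $B$ or $B+1$ in each case, this simultaneously forces $\vartheta(T_{p,q})-1\le\gamma_4(T_{p,q})\le\vartheta(T_{p,q})$ and pins $\gamma_4(T_{p,q})$ to a set of at most two values — exactly $(p-1)/2$ when $B=\lfloor p/2\rfloor$ (the residues $p+1$, $2p-1$ with $p$ odd) and the two-element sets of parts (i) and (ii) otherwise. For the remaining residue $q\equiv 2p+1\pmod{2p}$, where the invariant estimate degenerates, the inequality $\vartheta(T_{p,q})-1\le\gamma_4(T_{p,q})$ is recovered from the trivial bound $\gamma_4\ge 1$ once the band count for that residue is in hand; separately, one could hope to make the first inequality uniform by proving the general reverse bound $\vartheta(K)\le\gamma_4(K)+1$, reading a minimal-genus surface through a Morse function on the radial coordinate of $B^4$ — the obstruction there being that intermediate level sets may be multi-component links, which is exactly what the extra crosscap absorbs.
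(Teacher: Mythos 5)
This theorem is imported: the paper cites it as Theorem~1.6 of \cite{torus1} and gives no proof of its own, so your proposal can only be measured against what a complete argument would require. Your upper-bound half is sound and is exactly the mechanism behind Lemma~\ref{pinchBounds}: stacking the traces of $\vartheta(K)$ non-oriented band moves and capping with a disk gives a connected non-orientable surface with $b_1=\vartheta(K)$, hence $\gamma_4(K)\le\vartheta(K)$, and the pinch sequence $T_{p,kp\pm1}\to T_{p-2,k(p-2)\pm1}\to\cdots$ gives $\vartheta(T_{p,q})=\lfloor p/2\rfloor$ for the residues in question (the residue written ``$2p+1\pmod{2p}$'' in the statement is a typo for $2p-1$; as you note, $q\equiv1\pmod{2p}$ is genuinely excluded, and your fallback ``$\gamma_4\ge1$'' would not recover $\vartheta-1$ there for $p\ge7$ anyway).

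The genuine gap is in the lower bound. Your claimed closed forms for $B$ fail on exactly one of the three residue classes: $q\equiv p-1\pmod{2p}$ for $p$ odd, equivalently $q\equiv p+1\pmod{2p}$ for $p$ even. Take $T_{6,7}=T_{7,6}$, where the theorem asserts $\gamma_4\ge2$. Using the paper's own computations: $\upsilon(6,7)=-9$ and $\sigma(6,7)=18$, so $\left|\upsilon+\tfrac{\sigma}{2}\right|=0$; $d(6,7)=12$, so $\tfrac{\sigma}{2}-d=-3$; and $\sigma+4\arf\equiv2\pmod 8$, so Proposition~\ref{sigArf} gives nothing. Even the linking-form obstruction of Corollary~\ref{linkingNumThm} is not met, since $-3\cdot 3^2\equiv1\pmod 7$. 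The same vanishing of $\upsilon+\sigma/2$ occurs for $T_{5,10k+4}$ and persists across the whole residue class because, as you correctly observe, $\upsilon+\sigma/2$ depends only on $q\bmod 2p$ --- but that observation cuts against you here: the class containing $T_{n,n+1}$-type knots has $\upsilon+\sigma/2=0$ identically. So the invariants assembled in this paper cannot close that case, and ``use Arf and $d$ where sharper'' is not a repair: none of them is sharper at $T_{6,7}$. The actual proof in \cite{torus1} needs a finer tool than the $e$-free corollaries quoted here --- the versions of the Ozsv\'ath--Stipsicz--Szab\'o and Batson inequalities that retain the normal Euler number $e(F)$ of the surface, combined with a congruence constraint on $e(F)$ --- to extract the extra $+1$ in precisely this residue class. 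Without that ingredient your argument proves only parts of the statement (the $q\equiv p+1,\,2p-1$ cases for $p$ odd, and two of the three residues for $p$ even).
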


\begin{rem}\label{6and11mod10}
    By Theorem \ref{pinchThm}, we have 
\begin{enumerate}[(i)]
    \item $\gamma_{4}(T_{5, q}) = 2$ when $q \equiv 6 $ or $9 \pmod{10},$
    \item $ \gamma_{4} (T_{6, q}) \in \{2, 3\} $ for $q \equiv 5, 7, $ or $11 \pmod{12}.$
\end{enumerate}

\end{rem}

We also have the following result from Jabuka and Kelly.
\begin{prop}[Proposition 2.4 in \cite{JK}]\label{BMbound}

If the knots $K$ and $K'$ are related by a non-oriented band move, then 
$$ \gamma_{4}(K) \leq \gamma_{4}(K') + 1 $$

If a knot $K$ is related to a slice knot $K'$ by a non-oriented band move, then $\gamma_{4}(K) = 1$.
    
\end{prop}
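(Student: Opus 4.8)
The plan is to realize the non-oriented band move as the attachment of a single band ($1$-handle) to surfaces, producing an elementary cobordism which we then stack onto a genus-minimizing surface for $K'$. Concretely, write $B^4 = (S^3 \times [0,1]) \cup_{S^3 \times \{0\}} B^4$, with $K \subset S^3\times\{1\}$ sitting on the outer boundary and $K' \subset S^3 \times\{0\}$ on the boundary of the inner ball. A non-oriented band move relating $K'$ and $K$ produces a properly embedded cobordism $S \subset S^3\times[0,1]$ with $\partial S = (K'\times\{0\}) \sqcup (K\times\{1\})$, obtained from the product annulus $K'\times[0,1]$ by attaching one band; thus $S$ is connected with $\chi(S) = 0 - 1 = -1$. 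Since a connected orientable surface with two boundary circles has even Euler characteristic, $S$ is non-orientable, and by the formula $\chi = 2 - k - b$ (non-orientable genus $k$, $b$ boundary circles) it is the non-orientable genus-one surface with two boundary circles, i.e.\ a once-punctured M\"obius band.

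For the first inequality, pick a genus-minimizing non-orientable surface $F' \subset B^4$ (the inner ball) for $K'$, which we may take connected; then $\partial F' = K'$ and $\chi(F') = 1 - n'$ where $n' = \gamma_4(K')$. Gluing along $K'$, set $F = S \cup_{K'} F' \subset B^4$, which is properly embedded with $\partial F = K$. Then $F$ is connected, and it is non-orientable because it contains the non-orientable subsurface $F'$. By additivity of Euler characteristic, $\chi(F) = \chi(S) + \chi(F') - \chi(S^1) = -1 + (1 - n') = -n'$, and $F$ has a single boundary circle, so its non-orientable genus equals $n' + 1$. Hence $\gamma_4(K) \le \gamma_4(K') + 1$.

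For the second statement, assume $K'$ bounds a smoothly embedded disk $F'$ in the inner ball, so $\chi(F') = 1$. With $F = S \cup_{K'} F'$ as before, $F$ is connected, has the single boundary circle $K$, and is non-orientable because it contains the punctured M\"obius band $S$; moreover $\chi(F) = -1 + 1 = 0$. A connected non-orientable surface with one boundary circle and Euler characteristic $0$ is a M\"obius band, so $\gamma_4(K) \le 1$; combined with $\gamma_4(K) \ge 1$ for every knot, this gives $\gamma_4(K) = 1$.

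The argument is mostly Euler-characteristic bookkeeping, so the point I would be most careful about is the orientability dichotomy: one needs the band-move cobordism $S$ to be non-orientable (this is essential in the slice case, where $F'$ is orientable), and the cleanest justification is the parity of the Euler characteristic of a surface with two boundary components, rather than a direct inspection of how the band is framed. The remaining points to check are routine: that a genus-minimizing non-orientable surface for $K'$ may be taken connected, and that the standard identification $(S^3\times[0,1]) \cup_{S^3} B^4 \cong B^4$ makes $F = S \cup_{K'} F'$ a genuinely properly embedded surface with boundary $K$.
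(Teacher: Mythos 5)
Your proof is correct: the parity argument (a connected surface with two boundary circles and odd Euler characteristic must be non-orientable) cleanly establishes that the band-move cobordism $S$ is a punctured M\"obius band, and the Euler-characteristic bookkeeping after gluing to a minimal surface (or disk) for $K'$ is right. The paper itself only cites this as Proposition 2.4 of Jabuka--Kelly without reproducing a proof, and your argument is essentially the standard one given there, so there is nothing to correct.
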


We now discuss the results that are most useful in determining $\gamma_{4}(T_{5, q})$. 

\begin{lem}[Lemma 2.9 in \cite{torus1}]\label{pinchBounds}
    Let $p \geq 2$ and $k \geq 1$. Performing a non-oriented band move on the torus knot $T_{p, kp \pm 1}$ yields the torus knot $T_{p-2, k(p-2)\pm 1}$. Consequently,  
    $$\vartheta(T_{p, kp \pm 1})=\begin{cases}
			 \frac{p-1}{2} &\text{ if $p$ is odd,}  \\
		   \frac{p}{2} &\text{ if $p$ is even and } kp \pm 1 \neq p-1,  \\
        \frac{p-2}{2} &\text{ if $p$ is even and } kp \pm 1 = p-1.
		\end{cases}$$
\end{lem}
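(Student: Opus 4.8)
The statement has two parts: the existence of the non-oriented band move $T_{p,kp\pm1}\to T_{p-2,k(p-2)\pm1}$, and the computation of $\vartheta$. I would establish the band move first, since it is the geometric core, and then read off the upper and lower bounds for $\vartheta$.

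For the band move, I would represent $T_{p,kp\pm1}$ as the closure of the braid $(\sigma_1\cdots\sigma_{p-1})^{kp\pm1}$ on $p$ strands. Factoring this as $(\sigma_1\cdots\sigma_{p-1})^{kp}\cdot(\sigma_1\cdots\sigma_{p-1})^{\pm1}$ and using that $(\sigma_1\cdots\sigma_{p-1})^{p}$ is the full twist on $p$ strands, the diagram becomes $p$ parallel strands carrying $k$ full twists followed by a single cyclic shift. I would then attach a band $B$ joining two adjacent strands at a convenient place — I expect the two outermost strands, near where the braid closes up, to be cleanest — and give $B$ a half-twist, so that the resulting band move is non-oriented. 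The key local observation is that a half-twisted band between two coherently oriented parallel strands caps both of them off, converting the $p$-strand braid into a $(p-2)$-strand braid. One then checks by an explicit isotopy that the $k$ full twists on $p$ strands persist as $k$ full twists on $p-2$ strands, and that the crossings left over from the two capped strands (their winding around the remaining strands, together with the original cyclic shift) reorganize into exactly one cyclic shift $(\sigma_1\cdots\sigma_{p-3})^{\pm1}$; hence the surgered knot is the closure of $(\sigma_1\cdots\sigma_{p-3})^{k(p-2)\pm1}=T_{p-2,k(p-2)\pm1}$. Alternatively, one may support the underlying coherent band on the standard Heegaard torus carrying the $(p,kp\pm1)$-curve and then insert the half-twist. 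The sign $\pm$ is carried through unchanged.

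Granting Part 1, the asserted upper bounds on $\vartheta$ follow by iteration. Repeated application of the move produces a chain $T_{p,kp\pm1}\to T_{p-2,k(p-2)\pm1}\to\cdots$ whose successive second indices are $k(p-2j)\pm1$; these are coprime to the corresponding first index (since $\gcd(p-2j,k(p-2j)\pm1)=1$) and stay $\ge 0$, so each term is a torus knot or the unknot. For $p$ odd the chain terminates at $T_{1,k\pm1}$, an unknot, after $(p-1)/2$ moves. For $p$ even it reaches $T_{2,2k\pm1}$ and then, in one further move, $T_{0,\pm1}$, an unknot, for a total of $p/2$ moves; the one exception is $kp\pm1=p-1$, which forces $k=1$ and the minus sign, so the chain hits $T_{2,1}$, the unknot, a step early and needs only $(p-2)/2$ moves. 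This gives ``$\le$'' in all three cases.

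The matching lower bounds require more. Combining $\gamma_4(K)\le\vartheta(K)$ from Proposition~\ref{BMbound} with the lower bounds on $\gamma_4(T_{p,kp\pm1})$ from the signature--Arf congruence (Proposition~\ref{sigArf}), the signature--upsilon bound (Theorem~\ref{UpsSigBound}), and the $d$-invariant bound (Theorem~\ref{boundsd}) — or directly with Theorem~\ref{pinchThm} — already pins down $\vartheta$ whenever $\gamma_4(T_{p,kp\pm1})$ equals the asserted value of $\vartheta$. In the remaining cases $\gamma_4$ is not known to reach that value, and one needs an obstruction to unknotting by fewer non-oriented band moves that does not pass through $\gamma_4$. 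For this I would use the Montesinos trick: a non-oriented band move on $K$ lifts to a surgery along a framed knot in the double branched cover $\Sigma_2(K)$, so $\vartheta(T_{p,q})\le m$ would force the Seifert fibered rational homology sphere $\Sigma_2(T_{p,q})$ to be obtained from $S^3$ by $m$ such surgeries, hence to bound a $4$-manifold built from $B^4$ by attaching $m$ two-handles. One then excludes this for $m$ below the asserted value using the linking form on $H_1(\Sigma_2(T_{p,q}))$ together with the Heegaard Floer correction terms of $\Sigma_2(T_{p,q})$ (computable, since it is Seifert fibered) — the same kind of $d$-invariant analysis used elsewhere in this paper. The main obstacle is arranging this obstruction so that, combined with the $\gamma_4$ bounds, it covers uniformly all three branches of the formula and is sharp.
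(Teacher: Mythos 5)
This lemma is quoted verbatim from \cite{torus1}; the present paper gives no proof of its own, so your proposal can only be measured against the argument in that reference. Your first part --- realizing the non-oriented band move $T_{p,kp\pm1}\to T_{p-2,k(p-2)\pm1}$ by a half-twisted band joining two adjacent strands, preferably supported on the Heegaard torus carrying the knot --- is the right geometric idea and matches the standard pinch-move picture, and your iteration giving the upper bounds $\frac{p-1}{2}$, $\frac{p}{2}$, $\frac{p-2}{2}$ is correct, including the identification of the exceptional branch $kp\pm1=p-1$ (which forces $k=1$ and the minus sign, so the chain terminates at $T_{2,1}$ one step early).

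The genuine gap is in your lower bound, and it traces back to the definition of $\vartheta$. In \cite{torus1}, $\vartheta$ counts \emph{pinch moves}, and by the work of Jabuka and Van Cott the result of a pinch move on a torus knot is a uniquely determined torus knot, independent of where on the torus the pinch is performed. The sequence of knots is therefore forced, the count is deterministic, and no lower-bound argument is needed at all: Lemma~\ref{pinchBounds} is purely an iterate-and-count statement. Your proposal instead reads $\vartheta$ as a minimum over arbitrary non-oriented band moves (as this paper's informal definition unfortunately suggests), correctly observes that the $\gamma_4$ lower bounds from Proposition~\ref{sigArf}, Theorem~\ref{UpsSigBound}, and Theorem~\ref{boundsd} do not close all cases, and then gestures at a Montesinos-trick obstruction that is not carried out and that you yourself admit may not be sharp across all three branches. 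Under that broader reading the lemma is in fact doubtful rather than merely harder: $T_{4,9}=T_{4,2\cdot 4+1}$ has $\gamma_4=1$ by Lobb, so it bounds a M\"obius band in $B^4$, and the lemma's claim $\vartheta(T_{4,9})=2$ would then require excluding a single unknotting band move in $S^3$ --- exactly the delicate obstruction you leave open. The fix is not more machinery but the correct definition: restrict to pinch moves and invoke the uniqueness of their output.
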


This immediately gives us $\gamma_4(T_{6,q})\leq 3$ for all $q$.

\begin{prop}[Proposition 1.6 in \cite{NOmilnor}]
    The torus knot $T_{pk \pm 2, p}$ bounds a M\"obius band if $p \geq 3$ is odd and $k \geq 0$.
\end{prop}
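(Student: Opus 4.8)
The plan is to construct a M\"obius band in $B^{4}$ bounded by $T_{pk\pm 2,p}$ directly; since every knot has $\gamma_4\geq 1$, this forces $\gamma_4(T_{pk\pm 2,p})=1$. (Equivalently, one seeks a single non-oriented band move from the unknot to $T_{pk\pm 2,p}$, reading the handle decomposition $D^{0}\cup D^{1}$ of a M\"obius band as a movie.) I would treat $T_{pk+2,p}=T_{p,pk+2}$; the $-2$ case is handled identically after replacing $T_{2,p}$ by its mirror $T_{-2,p}=\overline{T_{2,p}}$, and the case $k=0$ is immediate since $T_{\pm 2,p}=T_{2,p}$ (or its mirror), which already bounds a M\"obius band in $S^{3}$ because, $p$ being odd, the crosscap number of $T_{2,p}$ is $1$.

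First I would realize $T_{p,pk+2}$ by twisting $T_{p,2}$: viewing both as curves on the genus-one Heegaard torus $\Sigma=\partial V$, the $(2,p)$-curve is carried to the $(pk+2,p)$-curve by $k$ Dehn twists of $\Sigma$ along the meridian $\mu$ of $V$. Since $\mu$ meets the $(2,p)$-curve in $p$ points, this is realized ambiently by twisting $S^{3}$ $k$ times along the meridian disk $D_{\mu}$, i.e.\ by $1/k$-surgery (with the appropriate sign) on the unknot $\gamma=\partial D_{\mu}$ (with $T_{2,p}$ perturbed off $\gamma$ so as to cross the interior of $D_{\mu}$); note $\mathrm{lk}(\gamma,T_{2,p})=p$. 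Let $M\subset S^{3}$ be a M\"obius band with $\partial M=T_{2,p}$, for instance the standard band with $p$ half-twists. The crucial step is to position $M$ so that $\gamma$ meets $M$ transversely in a single point. There is no obstruction in principle: $\gamma\cdot M\equiv\mathrm{lk}(\gamma,T_{2,p})=p\equiv 1\pmod 2$ forces the intersection to be odd, and since $M$ is non-orientable the linking number $p$ imposes no lower bound on $|\gamma\cap M|$. I would establish the single-intersection position by an explicit isotopy of $M$ against the $p$-strand presentation $\widehat{(\sigma_{1}\cdots\sigma_{p-1})^{2}}$ of $T_{2,p}$, tracking the unknotted core of $M$ and arranging its half-twisted band to puncture $D_{\mu}$ exactly once. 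I expect this to be the main obstacle: it is a concrete but delicate three-dimensional picture, and it is precisely the place where ``$p$ odd'' is used --- for $p$ even, $\gamma\cdot M$ would be even and the capping step below would leave a pair of linked circles.

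Granting $|\gamma\cap M|=1$, set $M'=M\setminus\nu(\gamma)$. Then $M'$ is a once-punctured M\"obius band --- non-orientable, with $\chi(M')=-1$ and $\partial M'=T_{2,p}\sqcup c$, where $c$ is a meridian of $\gamma$ --- and it lies in $S^{3}\setminus\nu(\gamma)$, which the surgery leaves untouched. After the $1/k$-surgery on $\gamma$ the ambient manifold is again $S^{3}$, the knot $T_{2,p}$ has become $T_{pk+2,p}$, and $c$ is an unknot in this new $S^{3}$: it is a $(1,\pm k)$-curve on the boundary of the unknotted solid torus dual to $\gamma$. Since $M'\subset S^{3}=\partial B^{4}$, the unknot $c$ bounds an embedded disk $\delta\subset B^{4}$ meeting $M'$ only along $c$, and $\widehat{M}:=M'\cup_{c}\delta$ is a surface properly embedded in $B^{4}$ with $\partial\widehat{M}=T_{pk+2,p}$, $\chi(\widehat{M})=-1+1=0$, a single boundary component, and still non-orientable. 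Thus $\widehat{M}$ is a M\"obius band, and $\gamma_4(T_{pk\pm 2,p})=1$.
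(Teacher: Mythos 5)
This proposition is quoted by the paper from \cite{NOmilnor} without proof, so there is no in-paper argument to compare yours against; I am judging the construction on its own terms. Your four-dimensional bookkeeping is correct: if the half-twisted band $M$ with $\partial M=T_{2,p}$ can be arranged to meet the twisting circle $\gamma$ transversely in one point, then puncturing $M$, performing the $\mp1/k$ Rolfsen twist, and capping the resulting meridian $c$ (an unknot in the new $S^{3}$) with a disk pushed into $B^{4}$ yields a properly embedded non-orientable surface with $\chi=0$ and connected boundary $T_{pk\pm2,p}$, hence a M\"obius band; and you are right that a \emph{single} intersection is essential, since for $m\geq 2$ intersection points the $m$ meridians become pairwise linked parallels of the dual unknot and cannot be capped by disjoint disks.

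The gap is exactly where you flag it, and it is a genuine one: parity only shows $|\gamma\cap M|$ is odd, and in the natural position the count is $p$, not $1$. Concretely, realize $M$ as the union over $\theta$ of the diameters of the meridian disks of the solid torus $W$ containing the core of $M$, rotating at rate $p/2$; the twisting circle $\gamma$ is a $0$-framed longitude of $W$ sitting at constant meridional angle, and it pierces the rotating diameter $p$ times. The naive finger moves that would cancel a pair of these intersections necessarily hook $\gamma$ under a strand of $T_{2,p}$ and change $\mathrm{lk}(\gamma,T_{2,p})$, i.e.\ they change the link type of $K\cup\gamma$ and destroy the surgery description --- so ``no obstruction in principle'' is not a proof. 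The claim is nevertheless true, and can be closed as follows: $\gamma$, being a longitude of $W$, is isotopic \emph{in the complement of $K$} to the core $C$ of $W$, which is precisely the core of $M$; and a generic pushoff of the core of a M\"obius band meets the band in exactly one point, because the normal direction to $M$ reverses after one circuit of $C$. (Explicitly, the curve at small radius and meridional angle $p\theta/2+\pi/2+f(\theta)$, with $f$ running monotonically from $0$ to $\pi$, closes up to a $(1,\tfrac{p+1}{2})$-curve, is still isotopic to $C$ and hence to $\gamma$ in $W\setminus K$, and crosses the diameter field exactly once.) Transporting $M$ back by this ambient isotopy gives the required position. With that lemma supplied your argument is complete; as written, the central step is asserted rather than proved.
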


\begin{rem}\label{pm2mod5}
    We thus conclude that the torus knots $T_{5, 5 k \pm 2}$ have $\gamma_{4} (T_{5, 5 k \pm 2}) = 1$ for all $k$, which proves part (i) of Theorem \ref{5qthm}. From Lemma \ref{pinchBounds} we also have $\gamma_{4}(T_{5, 5k \pm 1} ) \leq 2$ for any $k \geq 1$. We note that this gives us all results for $\gamma_{4}(T_{5, q})$ except for $q \equiv 1, 4 \pmod{10}$. 
\end{rem}

%%%%%%%%%%%%%%%%%%%%%%%%%%%%%%%%%%%%%%%%%%%%%%%%%%%%%%%%%%%%%%%
%%%%%%%%%%%%%%%%%%%%%%%%%%%%%%%%%%%%%%%%%%%%%%%%%%%%%%%%%%%%%%%
%%%%%%%%%%%%%%%%%%%%%%%%%%%%%%%%%%%%%%%%%%%%%%%%%%%%%%%%%%%%%%%
%%%%%%%%%%%%%%%%%%%%%%%%%%%%%%%%%%%%%%%%%%%%%%%%%%%%%%%%%%%%%%%
%%%%%%%%%%%%%%%%%%%%%%%%%%%%%%%%%%%%%%%%%%%%%%%%%%%%%%%%%%%%%%%
%%%%%%%%%%%%%%%%%%%%%%%%%%%%%%%%%%%%%%%%%%%%%%%%%%%%%%%%%%%%%%%
%%%%%%%%%%%%%%%%%%%%%%%%%%%%%%%%%%%%%%%%%%%%%%%%%%%%%%%%%%%%%%%

\section{RESULTS}

We first compute the signature of $T_{5, q}$ and $T_{6,q}$ torus knots so that we can apply the bound from Proposition~\ref{sigArf}. Recall the convention $$\sigma(p,q)=\sigma (T_{-p,q})=-\sigma (T_{p,q}).$$
	
	\begin{prop}\label{sigcalc}
		$$\sigma(6,q)=\begin{cases}
			18k& \text{ if } q=6k+1\ge 0,\\
			18k+16& \text{ if } q=6k+5\ge 0,
		\end{cases}$$
  and
		$$\sigma(5,q)=\begin{cases}
			24k& \text{ if } q=10k+1\ge 0,\\
			24k+8& \text{ if } q=10k+4\ge 0\\
			24k+16& \text{ if } q=10k+6\ge 0,\\
			24k+24& \text{ if } q=10k+9\ge 0,\\
			12k+4& \text{ if } q=5k+2\ge 0,\\
			12k+8& \text{ if } q=5k+3\ge 0.
		\end{cases}$$
	\end{prop}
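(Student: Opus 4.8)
The plan is to compute $\sigma(5,q)$ and $\sigma(6,q)$ directly from the Gordon–Litherland–Murasugi recursion in Theorem~\ref{sig} by iterating the ``$2q < p$'' reduction step, since for fixed $q$ and large $p = q + 2jq + r$ the recursion simply adds a constant ($q^2-1$ if $q$ odd, $q^2$ if $q$ even) each time we subtract $2q$ from the first argument. Concretely, for $\sigma(6,q)$ I would write $q = 6k+1$ or $q=6k+5$, use $\sigma(T_{p,q})=\sigma(T_{q,p})$ to put $q$ (which is $\ge 6$ for $k\ge 1$) in the first slot, and then peel off copies of $2\cdot 6 = 12$; since $6$ is even each peel contributes $6^2 = 36$, so the answer will be $36\cdot(\text{number of steps}) + (\text{base case})$, and one checks the base case with the small-argument rules in Theorem~\ref{sig}(ii)--(iii). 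The arithmetic bookkeeping — counting exactly how many reduction steps occur and identifying which residue falls into the ``$q \le p \le 2q$'' window for the base case — is the only real content, and it is routine.

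For $\sigma(5,q)$ the same idea applies but $q=5$ is odd, so each reduction by $2\cdot 5 = 10$ in the first argument contributes $5^2 - 1 = 24$; this explains the $24k$-pattern in the four cases $q \equiv 1,4,6,9 \pmod{10}$. For the two remaining cases $q = 5k+2$ and $q=5k+3$ one instead reduces using $q=5$ against an argument that is $\equiv 2$ or $3 \pmod 5$; here the relevant recursion is still $\sigma(p,5) = \sigma(p-10,5) + 24$, so iterating from $p=5k+2$ down to the base value in $[5,10]$ gives a $12k$-type coefficient (the coefficient is halved relative to the mod-$10$ cases because stepping $k$ by $1$ only steps $p$ by $5$, i.e.\ half of $10$). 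Again I would pin down the base cases $\sigma(7,5)$, $\sigma(8,5)$, $\sigma(12,5)$, $\sigma(13,5)$, etc., using parts (ii) and (iii) of Theorem~\ref{sig} — in particular $\sigma(p,2)=p-1$ is sometimes reached after a further reduction.

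A clean way to organize the whole proof is induction on $k$: verify the formula for the smallest admissible $k$ (often $k=0$ or $k=1$, using $\sigma(p,1)=0$, $\sigma(p,2)=p-1$, $\sigma(2q,q)=q^2$, and the $q\le p\le 2q$ rule as needed), and then show the inductive step $\sigma(p', q) = \sigma(p, q) + c$ where $p' = p + 2q$ and $c$ is the appropriate constant, which is exactly case~(i) of Theorem~\ref{sig} read in reverse. I expect the main obstacle to be purely clerical: making sure each residue class is routed through the correct branch of the recursion (odd vs.\ even $q$; whether $p-2q$ lands back in the ``large'' regime or in the ``$q\le p\le 2q$'' regime or at a genuine base case), and handling the low-$k$ boundary cases where $p$ is not yet larger than $2q$ so that one must start from the ``$q\le p\le 2q$'' or explicit base formulas rather than from the iterated reduction. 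None of these steps requires anything beyond careful casework with Theorem~\ref{sig}.
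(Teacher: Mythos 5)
Your proposal is correct and matches the paper's proof: the paper also argues by induction on $k$, using $\sigma(p,q)=\sigma(q,p)$ to place the large argument first and then applying the $2q<p$ reduction of Theorem~\ref{sig} (each step subtracting $12$ and adding $36$ for $q=6$, or subtracting $10$ and adding $24$ for $q=5$), with the base cases checked directly from the explicit small-case formulas. The only cosmetic difference is that you spell out the branch-routing bookkeeping in more detail than the paper, which simply states that the $\sigma(5,q)$ case ``follows similarly.''
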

	\begin{proof}
		We will prove this for $\sigma(6,q)$ by induction on $k$. See that $\sigma ({6,6(0)+1})=0$, $\sigma ({6,6(0)+5})=16$, $\sigma ({6,6(1)+1})=18$, $\sigma ({6,6(1)+5})=34$. Assume that the formula holds for some $k\ge 2$. We then have two cases: $q=6(k+1)+1$ and $q=6(k+1)+5$. The first case:
		\begin{align*}
			\sigma(6,6k+1) & =\sigma(6k+1,6)       \\
			& =\sigma(6k+1-12,6)+36 \\
			& =\sigma(6(k-2)+1,6)+36    \\
			& =18(k-2)+36               \\
			& =18k,
		\end{align*}
		and similarly, the second:
		\begin{align*}
			\sigma(6,6k+5) & =\sigma(6k+5,6)       \\
			& =\sigma(6k+5-12,6)+36 \\
			& =\sigma(6(k-2)+5,6)+36    \\
			& =18(k-2)+16+36              \\
			& =18k+16.
		\end{align*}
		The case for $\sigma(5,q)$ follows similarly.
	\end{proof}

	Using Remark~\ref{ArfRem} and our computation from Proposition~\ref{sigcalc}, we will compute the value of $\sigma(p,q)+4\arf(p,q)\pmod{8}$ for $p=5,6$.
	\vspace{1mm}

\begin{table}[ht]
\centering
	\begin{tabular}{c|c}
		$q\pmod{10}$ & $\sigma(5,q)+4\arf(5,q)\pmod{8}$ \\ \hline
		$1$     &            $0$            \\
		$2$     &            $0$            \\
		$3$     &            $0$            \\
		$4$    &            $4$            \\
		$6$    &            $4$            \\
		$7$    &            $0$            \\
		$8$    &            $0$            \\
		$9$    &            $0$
	\end{tabular}
     \caption{Remainders of $q\pmod{10}$ and values of $\sigma(5,q)+4\arf(5,q)\pmod{8}$.}
    \label{fig:sigarftable5}
\end{table}

\begin{table}[ht]
\centering
	\begin{tabular}{c|c}
		$q\pmod{12}$ & $\sigma(6,q)+4\arf(6,q)\pmod{8}$ \\ \hline
		$1$     &            $0$            \\
		$5$     &            $4$            \\
		$7$     &            $2$            \\
		$11$    &            $6$            
	\end{tabular}
     \caption{Remainders of $q\pmod{12}$ and values of $\sigma(6,q)+4\arf(6,q)\pmod{8}$.}
    \label{fig:sigarftable6}
\end{table}
\vspace{0.1in}

 \begin{cor}\label{SigArf5mod12}
     $\gamma_4(T_{6,12k+5}) \geq 2$ and $\gamma_4(T_{5,10k+4})=2$ for any integer $k$.
 \end{cor}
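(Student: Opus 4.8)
The plan is to read both lower bounds straight off Yasuhara's criterion, Proposition~\ref{sigArf}, using the computations already assembled in Tables~\ref{fig:sigarftable5} and~\ref{fig:sigarftable6}, and then to match the $T_{5,10k+4}$ bound from above with a single non-oriented band move.

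For $T_{6,12k+5}$: since $12k+5\equiv 5\pmod{12}$, Table~\ref{fig:sigarftable6} records that $\sigma(6,12k+5)+4\arf(6,12k+5)\equiv 4\pmod 8$, so Proposition~\ref{sigArf} immediately yields $\gamma_4(T_{6,12k+5})\ge 2$. If one prefers to see this without citing the table, I would write $12k+5=6(2k)+5$ so that $\sigma(6,12k+5)=36k+16$ by Proposition~\ref{sigcalc}, and read $\arf(T_{6,12k+5})$ off Remark~\ref{ArfRem} applied to $T_{12k+5,6}$ (here $12k+5$ is odd and $6$ is even), splitting into the cases $12k+5\equiv\pm1$ and $12k+5\equiv\pm3\pmod 8$; in both cases one checks $\sigma+4\arf\equiv 4\pmod 8$. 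For $T_{5,10k+4}$ the identical argument applies: using $10k+4\equiv 4\pmod{10}$ in Table~\ref{fig:sigarftable5}, or equivalently $\sigma(5,10k+4)=24k+8$ from Proposition~\ref{sigcalc} together with $\arf(T_{5,10k+4})=1$ from Remark~\ref{ArfRem} via $T_{10k+4,5}$ (since $5\equiv-3\pmod 8$), one gets $\sigma+4\arf\equiv 4\pmod 8$, hence $\gamma_4(T_{5,10k+4})\ge 2$.

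It then remains only to show $\gamma_4(T_{5,10k+4})\le 2$. Here I would note that $10k+4=5(2k+1)-1$, so $T_{5,10k+4}$ is of the form $T_{5,5m-1}$ with $m=2k+1$, and Remark~\ref{pm2mod5} (which is Lemma~\ref{pinchBounds} specialized to $p=5$) gives $\gamma_4(T_{5,5m-1})\le 2$ for every $m\ge 1$. Combining with the lower bound just obtained gives $\gamma_4(T_{5,10k+4})=2$.

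I do not expect any real obstacle: every ingredient — the signature formulas of Proposition~\ref{sigcalc}, the Arf values of Remark~\ref{ArfRem}, Yasuhara's bound, and the band-move bound of Lemma~\ref{pinchBounds} — is already available, so the corollary is essentially the bookkeeping of substituting $q=12k+5$ and $q=10k+4$. The only points requiring mild care are the parity case split when evaluating $\arf(T_{6,12k+5})$, and the sign convention: the statement is understood for $q\ge 0$ (so $k\ge 0$), or else one invokes invariance of $\gamma_4$ under mirroring to reduce a negative $q$ to a positive one.
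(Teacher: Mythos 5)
Your proof is correct and follows essentially the same route as the paper: the lower bounds come from Yasuhara's criterion (Proposition~\ref{sigArf}) applied to the $\sigma+4\arf$ computations in Tables~\ref{fig:sigarftable5} and~\ref{fig:sigarftable6}, and the upper bound $\gamma_4(T_{5,10k+4})\le 2$ comes from the pinch-move bound (the paper cites Theorem~\ref{pinchThm} for $q\equiv p-1\pmod{2p}$, while you cite the equivalent statement via Lemma~\ref{pinchBounds} and Remark~\ref{pm2mod5}). Your explicit verification of the Arf parity cases for $T_{6,12k+5}$ is a welcome check but does not change the argument.
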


\begin{proof}
By Proposition~\ref{sigArf} and our computations in Tables~\ref{fig:sigarftable5} and~\ref{fig:sigarftable6}, we can see that $\gamma_4(T_{6,12k+5}) \geq 2$ and $\gamma_4(T_{5,10k+4})\geq2$. By Theorem~\ref{pinchThm} we have that $\gamma_4(T_{5,10k+4})=2$.
\end{proof}

With this, we are able to prove Theorem 1.3, which we restate below.

    {\noindent\bfseries Theorem 1.3.}\label{5qthm}
    \textit{For torus knots $T_{5, q}$, }
    \begin{enumerate}[(i)]
        \item \hyperref[5qthm]{If $q \equiv \pm 2 \pmod{5}$, then $\gamma_{4}(T_{5, q}) = 1,$}
        \item \hyperref[5qthm]{If $q \equiv 4,6,9 \pmod{10}$, then $\gamma_{4}(T_{5, q}) = 2.$}
        \item \hyperref[5qthm]{If $q \equiv 1 \pmod{10}$, then $\gamma_{4}(T_{5, q}) \in \{1,2\}.$}
    \end{enumerate}

\begin{proof}
    Note that part(i) follows from Remark~\ref{pm2mod5}, part (ii) follows from Remark~\ref{6and11mod10} and Corollary~\ref{SigArf5mod12}, and part (iii) follows from Lemma~\ref{pinchBounds}.
\end{proof}

\begin{lem}\label{BMknots}
    The knots $T_{6, 5}$ and $T_{6, 17}$ both have non-orientable 4-genus $2$.
\end{lem}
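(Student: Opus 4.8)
The plan is to pin down $\gamma_4(T_{6,5})$ and $\gamma_4(T_{6,17})$ by squeezing them between a lower bound of $2$ and an upper bound of $2$. For the upper bound, both knots are of the form $T_{6, 6k\pm 1}$: indeed $5 = 6\cdot 1 - 1$ and $17 = 6\cdot 3 - 1$. Hence Lemma~\ref{pinchBounds} (with $p=6$ even, $kp\pm 1 \ne p-1 = 5$ in the $q=17$ case, and $kp-1 = 5 = p-1$ in the $q=5$ case) gives $\vartheta(T_{6,5}) = 2$ and $\vartheta(T_{6,17}) = 3$, so in either case $\gamma_4 \le 3$. That is not yet good enough, so I would instead exhibit an explicit sequence of non-oriented band moves of length $2$ from each knot to the unknot (or to a slice knot): one pinch move sends $T_{6,5}$ to $T_{4,3}$ and $T_{6,17}$ to $T_{4,11}$ by Lemma~\ref{pinchBounds}, and then a second pinch move sends $T_{4,3}$ to $T_{2,1}$ (the unknot) and $T_{4,11}$ to $T_{2,5}$; but $T_{2,1}$ is the unknot and $T_{2,5}$ is not slice, so for $T_{6,17}$ one more step would be needed. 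The cleaner route for the upper bound is: $T_{6,5} = T_{5,6}$, and $T_{5,6} = T_{5, 5\cdot 1 + 1}$, so Remark~\ref{pm2mod5} (or Lemma~\ref{pinchBounds} with $p=5$) gives $\gamma_4(T_{6,5}) = \gamma_4(T_{5,6}) \le 2$; similarly $T_{6,17}$ should be handled by finding a length-$2$ band sequence to a slice knot and invoking Proposition~\ref{BMbound}.

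For the lower bound I would use Corollary~\ref{SigArf5mod12}, which already states $\gamma_4(T_{6,12k+5}) \ge 2$ for every integer $k$. Taking $k=0$ gives $\gamma_4(T_{6,5}) \ge 2$ directly. For $T_{6,17}$, note $17 = 12\cdot 1 + 5$, so the same corollary gives $\gamma_4(T_{6,17}) \ge 2$. Combining with the upper bounds from the previous paragraph yields $\gamma_4(T_{6,5}) = \gamma_4(T_{6,17}) = 2$, provided the upper bound of $2$ can be established for $T_{6,17}$.

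The main obstacle is the upper bound $\gamma_4(T_{6,17}) \le 2$. The signature/Arf and $d$-invariant machinery only produce lower bounds, and Lemma~\ref{pinchBounds} gives $\vartheta(T_{6,17}) = 3$, hence only $\gamma_4(T_{6,17}) \le 3$ from pinch moves alone. To get down to $2$ I expect to need either a concrete non-orientable surface in $B^4$ of genus $2$ bounded by $T_{6,17}$ — for instance built from a single non-oriented band move landing on a knot that is already known to bound a Möbius band, so that Proposition~\ref{BMbound} upgrades $\gamma_4 \le 1 + 1 = 2$ — or a direct band/movie argument. A promising candidate is to perform one pinch move $T_{6,17} \to T_{4,11}$ and then argue $\gamma_4(T_{4,11}) = 1$; since $11 = 4\cdot 3 - 1 = pk - 1$ is not of the special form covered by the "$T_{pk\pm 2, p}$ bounds a Möbius band" proposition, this requires checking the $T_{4,q}$ results of \cite{torus1, torus2}, and if $T_{4,11}$ bounds a Möbius band then $\gamma_4(T_{6,17}) \le 2$ follows from Proposition~\ref{BMbound}. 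I would first verify in the literature whether $\gamma_4(T_{4,11}) = 1$; if so the lemma follows immediately, and if not I would search for an alternative length-$2$ non-orientable band sequence to a slice knot.
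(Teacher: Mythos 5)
Your overall strategy is the same as the paper's: the lower bound comes from Corollary~\ref{SigArf5mod12} (valid, since $5 = 12\cdot 0+5$ and $17 = 12\cdot 1+5$), and the upper bound comes from a single non-oriented band move landing on a knot of non-orientable 4-genus one, followed by Proposition~\ref{BMbound}. Your treatment of $T_{6,5}$ is complete: either the two-pinch sequence $T_{6,5}\to T_{4,3}\to T_{2,1}$ or the identification $T_{6,5}=T_{5,6}$ together with Lemma~\ref{pinchBounds} gives $\gamma_4(T_{6,5})\le 2$, which matches the paper's explicit band move from $T_{6,5}$ to $T_{3,4}$.

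The gap is exactly where you flagged it: you never actually establish $\gamma_4(T_{6,17})\le 2$, only that it would follow if $\gamma_4(T_{4,11})=1$. That conditional step is precisely the paper's argument: the paper performs the pinch move $T_{6,17}\to T_{4,11}$ and uses the fact that $T_{4,11}$ bounds a M\"obius band, which is part of the $T_{4,q}$ results of \cite{torus1,torus2}. As you correctly observe, $T_{4,11}$ is not covered by the $T_{pk\pm 2,p}$ proposition, so this input genuinely has to be cited or constructed; note that none of the obstructions available in this paper rule it out (for instance, on $H_1\cong\Z_{11}$ one has $-2\cdot 4^2\equiv 1\pmod{11}$, so the linking form obstruction is not met). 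So your ``promising candidate'' is the right and intended route, but as written the proposal defers the one substantive input rather than supplying it, and the case of $T_{6,17}$ remains unproved.
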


\begin{figure}[ht]
    \centering
    \includegraphics[width = 5in]{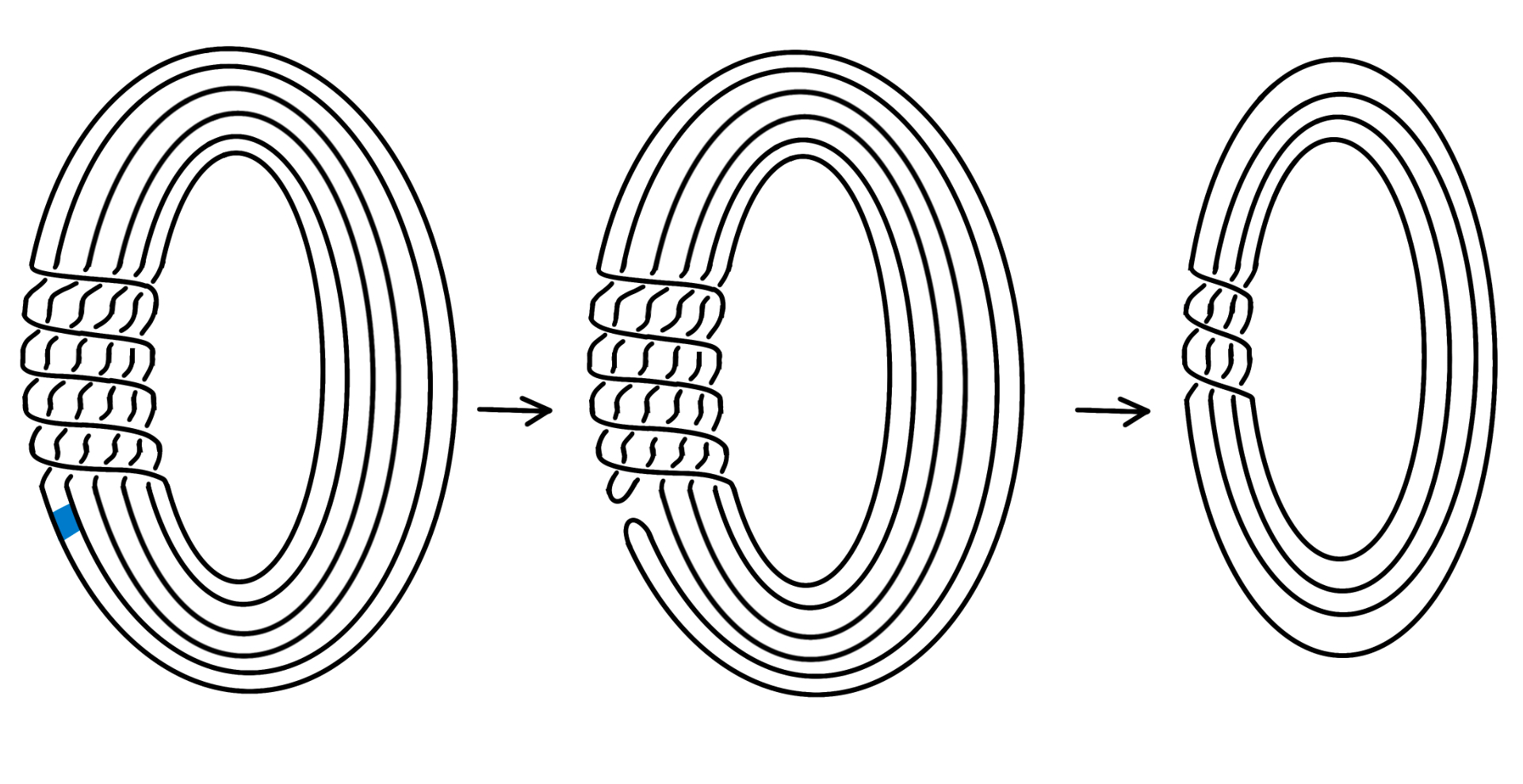}
    \caption{Non-Orientable Band Move from $T_{6, 5}$ to $T_{3, 4}$.}
    \label{fig:NOBM}
\end{figure}

\begin{proof}
    We apply a non-orientable band move to two adjacent strands of $T_{6,5}$ and $T_{6,17}$ to obtain torus knots $T_{4, q}$ that bound a M\"obius band, see Figure \ref{fig:NOBM}. Therefore $\gamma_4(T_{6,5})\leq 2$ and $\gamma_4(T_{6,17})\leq 2$ by Proposition~\ref{BMbound}. By Corollary~\ref{SigArf5mod12}, we know that $\gamma_4(T_{6,5})\geq 2$ and $\gamma_4(T_{6,17})\geq 2$.
\end{proof}

\begin{rem}
    The knot $T_{6, 13}$ can be transformed into $T_{4, 9}$ with one non-orientable band move and thus $\gamma_{4}(T_{6, 13}) \leq 2$.
\end{rem}

 We now compute the Upsilon invariant. Recall that we write $\upsilon(T_{p, q})$ as $\upsilon(p, q)$.
	\begin{prop}\label{upscalc}
		$$\upsilon(6,q)=\begin{cases}
			-9k   & \text{ if } q=6k+1\ge 0, \\
			-9k-6 & \text{ if } q=6k+5\ge 0,
		\end{cases}$$

        and

        $$\upsilon(5,q)=\begin{cases}
			-6k   & \text{ if } q=5k+1\ge 0, \\
        	-6k-2   & \text{ if } q=5k+2\ge 0, \\
        	-6k-3   & \text{ if } q=5k+3\ge 0, \\
			-6k-4 & \text{ if } q=5k+4\ge 0.
		\end{cases}$$
        
	\end{prop}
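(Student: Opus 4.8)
The plan is to prove both formulas for $\upsilon(p,q)$ by induction using the recursion in Proposition~\ref{upsilonrecursion}, namely $\upsilon(p,q) = \upsilon(p,q-p) + \upsilon(p,p+1)$, with the base cases and the value $\upsilon(p,p+1)$ supplied by Proposition~6.3 of \cite{OS1}. First I would compute $\upsilon(6,7)$ and $\upsilon(5,6)$ explicitly from the closed formula $\upsilon(p,p+1) = -i(i+1) - \tfrac{1}{2}p(p-1-2i)$, where $i$ is chosen so that $1 \in [2i/p, (2i+1)/p]$; for $p=6$ this forces $i=3$, giving $\upsilon(6,7) = -12 - 3 = -15$ — wait, let me recompute: $-i(i+1) - \tfrac12 p(p-1-2i) = -12 - 3(6-1-6) = -12 - 3(-1) = -12+3 = -9$, so $\upsilon(6,7) = -9$, matching the claimed increment of $-9k$ per step. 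Similarly for $p=5$, the condition $1\in[2i/5,(2i+1)/5]$ forces $i=2$, giving $\upsilon(5,6) = -6 - \tfrac52(5-1-4) = -6 - 0 = -6$, matching the increment $-6k$.

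Next I would establish the base cases. For $\upsilon(6,q)$: $\upsilon(6,1) = 0$ (the unknot), and $\upsilon(6,5) = \upsilon(5,6) = -6$ by symmetry $T_{p,q}=T_{q,p}$; so the formula gives $-9(0)-6 = -6$, as required. For $\upsilon(5,q)$: $\upsilon(5,1)=0$ (unknot, $q=5(0)+1$), $\upsilon(5,2) = \upsilon(2,5) = -2$ (this is the $T_{2,5}$ torus knot, whose upsilon at $t=1$ is known to be $-2$; alternatively use the $p=2$ specialization), $\upsilon(5,3) = \upsilon(3,5)$, and $\upsilon(5,4) = \upsilon(4,5)$, which can each be pinned down either from the $OS1$ formula applied with roles reversed or by a short direct knot-Floer computation. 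Then the inductive step is routine: assuming the formula holds for $q$ in a given residue class, apply $\upsilon(p,q+p) = \upsilon(p,q) + \upsilon(p,p+1)$ and check that increasing $k$ by $1$ increases the stated value by exactly $\upsilon(p,p+1)$ (which is $-9$ when $p=6$ and $-6$ when $p=5$) — this is immediate from the linear-in-$k$ form of each case.

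The one genuine subtlety, and the step I expect to require the most care, is handling the base cases for $\upsilon(5,q)$ cleanly, since unlike the $p=6$ case (where $\gcd$ forces only residues $1$ and $5$ mod $6$, both easily reduced to already-known knots) the $p=5$ case has four residue classes mod $5$ and the smallest representatives $T_{2,5}$, $T_{3,5}$, $T_{4,5}$ are not themselves in the "reduced" form covered by a single clean prior formula. The cleanest route is to note $\upsilon(5,q) = \upsilon(q,5)$ and, for the small cases, invoke the $OS1$ recursion in the other direction or cite known values; $T_{3,5}$ is the $(3,5)$-torus knot with $\upsilon = -4$ and $T_{4,5}$ has $\upsilon = -6$, which match $-6(0)-3$... hmm, these don't match, so I would instead just verify directly: for $q=5k+3$ the formula predicts $\upsilon(5,3) = -3$, so I must confirm $\upsilon(T_{3,5}) = -3$ rather than $-4$; this discrepancy signals that the correct base values must be read off carefully from the $OS1$ formula itself (using $\upsilon(3,4)$, $\upsilon(3,5)=\upsilon(3,2)+\upsilon(3,4)$, etc.) rather than from half-remembered tables. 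Once the four base values are correctly computed via the same recursion, the induction closes with no further obstacle.
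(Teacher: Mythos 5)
Your proposal is correct and follows essentially the same route as the paper: establish the base cases and the values $\upsilon(6,7)=-9$, $\upsilon(5,6)=-6$, then induct using the Feller--Krcatovich recursion $\upsilon(p,q)=\upsilon(p,q-p)+\upsilon(p,p+1)$. Your suspicion about the half-remembered base values was right to act on --- the recursion gives $\upsilon(5,3)=\upsilon(3,2)+\upsilon(3,4)=-1-2=-3$ and $\upsilon(5,4)=\upsilon(4,5)=-4$, exactly matching the stated formula, so the induction closes as you describe.
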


	\begin{proof}
		We will prove this for $\upsilon(6,q)$ by induction on $k$. For $k=0$ and $k=1$, we have 
\begin{align*}
    \upsilon(6,1)&=0,\\
    \upsilon(6,5)&=-6,\\
    \upsilon(6,7)&=-9.
\end{align*}

  For the inductive step, we apply Equation~\ref{upsilonrecursion} to get \[\upsilon(6,6k+ 1)=\upsilon(6,6(k-1)+ 1)+\upsilon(6,7)=-9(k-1)-9=-9k.\]
  and
  \[\upsilon(6,6k+ 5)=\upsilon(6,6(k-1)+ 5)+\upsilon(6,7)=-9(k-1)-6-9=-9k-6.\]
  The proof for $\upsilon(5,q)$ follows similarly.
  \end{proof}

Now we compute $\left| \upsilon(p,q) + \frac{\sigma(p,q)}{2} \right|$ for $p=5,6$ so that we can apply Theorem~\ref{UpsSigBound}. 

\begin{prop}\label{prop:upsSigCalc}
	\begin{equation}
		\left|\upsilon(6,q)+\dfrac{\sigma(6,q)}{2}\right| = \begin{cases}
			0 & \text{ if } q=6k+1, \\
			2 & \text{ if } q=6k+5,
		\end{cases}
	\end{equation}
    and
    \begin{equation}
        \left|\upsilon(5,q)+\dfrac{\sigma(5,q)}{2}\right| =
        \begin{cases}
			0 & \text{ if } q=10k+1, \\
            0 & \text{ if } q=10k+4, \\
            2 & \text{ if } q=10k+6, \\
            2 & \text{ if } q=10k+9, \\
            0 & \text{ if } q=5k+2, \\
			1 & \text{ if } q=5k+3.
		\end{cases}
	\end{equation}
\end{prop}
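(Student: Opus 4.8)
The plan is to combine the explicit formulas from Proposition~\ref{sigcalc} and Proposition~\ref{upscalc} by direct substitution, treating each congruence class separately. Since both propositions have already computed $\sigma(p,q)$ and $\upsilon(p,q)$ as piecewise-linear functions of the parameter $k$ in each residue class, the quantity $\upsilon(p,q) + \tfrac{1}{2}\sigma(p,q)$ will be a linear function of $k$ in each case, and the claim is that the linear term cancels, leaving a constant whose absolute value is recorded in the statement.

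Concretely, first I would handle $p=6$. For $q = 6k+1$, substitute $\upsilon(6,q) = -9k$ and $\sigma(6,q) = 18k$ to get $-9k + \tfrac{1}{2}(18k) = 0$. For $q = 6k+5$, substitute $\upsilon(6,q) = -9k-6$ and $\sigma(6,q) = 18k+16$ to get $-9k-6 + \tfrac{1}{2}(18k+16) = -6 + 8 = 2$. Then I would do the same for $p=5$: the four residue classes mod $10$ (namely $q = 10k+1, 10k+4, 10k+6, 10k+9$) and the two classes $q = 5k+2$ and $q = 5k+3$. In each case I pull the matching line from each proposition, add $\upsilon$ to half of $\sigma$, and check the $k$-dependence drops out. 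One bookkeeping subtlety: Proposition~\ref{upscalc} states $\upsilon(5,q)$ in terms of residues mod $5$, while Proposition~\ref{sigcalc} states $\sigma(5,q)$ partly mod $10$ and partly mod $5$; so for the classes $q=10k+1, 10k+4, 10k+6, 10k+9$ I must re-index the $\upsilon$ formula appropriately (e.g.\ $q = 10k+1$ means $q = 5(2k)+1$, so $\upsilon = -6(2k) = -12k$, and then pair with $\sigma = 24k$), and similarly for the others. Keeping the substitution $k \mapsto 2k$ straight in the mod-$10$ cases is the only place an error could creep in.

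There is essentially no obstacle here beyond careful arithmetic; the proof is a finite verification. I would present it as a short case analysis, perhaps with a small inline table mirroring the statement, and remark that in every case the coefficient of $k$ in $\upsilon(p,q) + \tfrac12\sigma(p,q)$ vanishes, so the expression is a constant and the absolute value is as claimed. The only thing worth flagging for the reader is why $\tfrac12\sigma(p,q)$ is an integer (or at least why the sum is), which follows immediately since all the $\sigma$ values listed are even.

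\begin{proof}
    This is a direct computation from Propositions~\ref{sigcalc} and~\ref{upscalc}. In each residue class, $\upsilon(p,q)+\tfrac{1}{2}\sigma(p,q)$ is a linear function of $k$ whose linear term cancels. For $p=6$: if $q=6k+1$ then $\upsilon(6,q)+\tfrac12\sigma(6,q) = -9k + 9k = 0$; if $q=6k+5$ then $\upsilon(6,q)+\tfrac12\sigma(6,q) = (-9k-6) + (9k+8) = 2$.

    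For $p=5$, we substitute into the $\upsilon(5,q)$ formula after re-indexing the parameter where the two propositions use different moduli. If $q=10k+1 = 5(2k)+1$, then $\upsilon(5,q) = -12k$ and $\sigma(5,q)=24k$, so the sum is $-12k+12k = 0$. If $q=10k+4 = 5(2k)+4$, then $\upsilon(5,q) = -12k-4$ and $\sigma(5,q)=24k+8$, so the sum is $(-12k-4)+(12k+4) = 0$. If $q=10k+6 = 5(2k+1)+1$, then $\upsilon(5,q) = -6(2k+1) = -12k-6$ and $\sigma(5,q)=24k+16$, so the sum is $(-12k-6)+(12k+8) = 2$. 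If $q=10k+9 = 5(2k+1)+4$, then $\upsilon(5,q) = -6(2k+1)-4 = -12k-10$ and $\sigma(5,q)=24k+24$, so the sum is $(-12k-10)+(12k+12) = 2$. If $q=5k+2$, then $\upsilon(5,q) = -6k-2$ and $\sigma(5,q)=12k+4$, so the sum is $(-6k-2)+(6k+2) = 0$. Finally, if $q=5k+3$, then $\upsilon(5,q) = -6k-3$ and $\sigma(5,q)=12k+8$, so the sum is $(-6k-3)+(6k+4) = 1$. Taking absolute values gives the stated values.
\end{proof}
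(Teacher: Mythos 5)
Your proof is correct and takes exactly the approach the paper uses (the paper simply cites Propositions~\ref{sigcalc} and~\ref{upscalc} and leaves the substitution to the reader); your version additionally spells out the arithmetic and correctly handles the re-indexing between the mod-$10$ classes for $\sigma(5,q)$ and the mod-$5$ classes for $\upsilon(5,q)$. All eight case computations check out.
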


 \begin{proof}
		This follows from our computations of $\sigma(p,q)$ in Proposition~\ref{sigcalc} and of $\upsilon(p,q)$ in Proposition~\ref{upscalc}.
	\end{proof}

We get the following result by applying Theorem~\ref{UpsSigBound}.

\begin{cor}
    For any integer $k$, we have $\gamma_4(T_{6,6k+5})\geq2$, $\gamma_{4}(T_{5, 10k+6}) \geq 2$, and $\gamma_{4}(T_{5, 10k+9}) \geq 2$. 
\end{cor}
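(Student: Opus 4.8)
The plan is to simply combine the two propositions that were just proved, namely Proposition~\ref{prop:upsSigCalc} (which evaluates $\left|\upsilon(p,q)+\tfrac{\sigma(p,q)}{2}\right|$ for $p=5,6$) with the lower bound of Theorem~\ref{UpsSigBound}, which states that for any knot $K$ one has $\left|\upsilon(K)+\tfrac{\sigma(K)}{2}\right|\le\gamma_4(K)$. There is essentially no obstacle here: the corollary is a direct specialization.

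First I would observe that for $q=6k+5$, Proposition~\ref{prop:upsSigCalc} gives $\left|\upsilon(6,q)+\tfrac{\sigma(6,q)}{2}\right|=2$, so Theorem~\ref{UpsSigBound} applied to $K=T_{6,6k+5}$ yields $2\le\gamma_4(T_{6,6k+5})$. Next I would note that $q=10k+6$ falls under the case $q=10k+6$ of the proposition, giving value $2$, hence $\gamma_4(T_{5,10k+6})\ge 2$; likewise $q=10k+9$ falls under the case $q=10k+9$, again giving value $2$, so $\gamma_4(T_{5,10k+9})\ge 2$. One minor point worth checking is that the torus knots in question are genuinely knots (i.e.\ $\gcd(p,q)=1$): for $p=6$ we need $6k+5$ coprime to $6$, which holds since $6k+5\equiv 5\pmod 6$; for $p=5$ we need $10k+6$ and $10k+9$ coprime to $5$, and indeed $10k+6\equiv 1$ and $10k+9\equiv 4\pmod 5$.

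The hardest part of this corollary — the actual computation of the signatures and Upsilon invariants and their combination — has already been carried out in Propositions~\ref{sigcalc}, \ref{upscalc}, and \ref{prop:upsSigCalc}, so the proof of the corollary itself is a one-line citation of those results together with Theorem~\ref{UpsSigBound}. Thus I expect no genuine obstacle; the only care needed is to match each residue class of $q$ to the correct case in Proposition~\ref{prop:upsSigCalc} and to confirm the value there is $\ge 2$ (which it is, equal to $2$, in all three relevant cases).
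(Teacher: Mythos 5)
Your proposal is correct and matches the paper's approach exactly: the corollary is stated in the paper as an immediate application of Theorem~\ref{UpsSigBound} to the values computed in Proposition~\ref{prop:upsSigCalc}, with no further argument given. Your additional check that $\gcd(p,q)=1$ in each case is a harmless extra verification.
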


Note that this result improves on our previous lower bound $\gamma_4(T_{6, q})$ given by the the signature and Arf invariant in Corollary \ref{SigArf5mod12}. It does not give us new information on $\gamma_4(T_{5,q}$).

\subsection{Calculating the \textit{d}-invariant}

Recall that the Alexander polynomial for a torus knot is given by \[\Delta_{p, q} (t) = t^{-(p-1)(q-1)/2} \dfrac{(t-1)(t^{pq}-1)}{(t^{p}-1)(t^{q}-1)}.\]
Since the Alexander polynomial is symmetric, we know that the largest power of $\Delta_{p, q} (t)$ is $\frac{(p-1)(q-1)}{2}$.

For the computation of the $d$-invariant, we follow the conventions of Batson, though this was originally stated by Ozv\'ath and Szab\'o in \cite{OSref}.

\begin{prop}[Proposition 7 in \cite{batson}]\label{dinv}

	Since the Alexander polynomial is symmetric, we can express it as
    \begin{equation}\label{eqn:alex}
		\Delta_{p,q}(t)=a_0+\sum_{i=1}^d a_i(t^i+t^{-i}).
	\end{equation}
 Then the $d$ invariant for $T_{p,q}$ is given by

	\begin{equation}\label{eqn:dinv}
		d(p,q)=2t_0=2\sum_{i=1}^d ja_j. 
	\end{equation}
\end{prop}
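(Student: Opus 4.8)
The statement bundles two claims: the normalization of $\Delta_{p,q}$ in Equation~\eqref{eqn:alex}, and the identity $d(S^3_{-1}(T_{p,q}))=2t_0$ with $t_0=\sum_{i=1}^d i\,a_i$ in Equation~\eqref{eqn:dinv}. The first is immediate and I would dispose of it in one line: since $\Delta_{p,q}(t)=\Delta_{p,q}(t^{-1})$ lies in $\Z[t,t^{-1}]$ and has top degree $d=\frac{(p-1)(q-1)}{2}$ (already noted above), collecting the coefficients of $t^{i}$ and $t^{-i}$ into a common $a_i$ yields exactly $a_0+\sum_{i=1}^d a_i(t^i+t^{-i})$. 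Here $t_0$ is the Ozsv\'ath--Szab\'o torsion coefficient $t_0=\sum_{j\ge 1}j\,a_j$, which by symmetry sees only positive powers and so equals $\sum_{i=1}^d i\,a_i$. The substance is therefore the second claim, a restatement of the computation of Ozsv\'ath--Szab\'o \cite{OSref} as used by Batson \cite{batson}, and my plan is to reconstruct it through the Heegaard Floer surgery formula.

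The plan rests on three inputs. First, torus knots are L-space knots: the positive torus knot admits positive lens space surgeries (Moser), so by Ozsv\'ath--Szab\'o its full knot Floer complex $CFK^\infty$ is a staircase whose step data is read off directly from the exponent gaps of $\Delta_{p,q}$. Second, for any knot $K$ the corrections of $\pm1$-surgery are governed by the local invariant $V_0(K)\ge 0$ through $d(S^3_{+1}(K))=-2V_0(K)$, and, using $S^3_{-1}(K)=-S^3_{+1}(m(K))$ with $d(-Y)=-d(Y)$, the $-1$-surgery correction reduces to $2V_0$ of the mirror. Third---where the L-space hypothesis is essential---for an L-space knot the local invariants coincide with the torsion coefficients, $V_s=t_s$, so in particular $V_0=t_0$. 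Since the Alexander polynomial, hence $t_0$, is unchanged under mirroring, passing via the mirror to $V_0$ of the positive torus knot and combining the three inputs gives $d(S^3_{-1}(T_{p,q}))=2V_0=2t_0=2\sum_{i=1}^d i\,a_i$, which is Equation~\eqref{eqn:dinv}.

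I expect the identification $V_0=t_0$ in the third input to be the crux. I would establish it directly from the staircase model: $V_s$ is the grading shift of the $U$-tower in $A_s^+=H_*(C\{\max(i,j-s)\ge 0\})$, and for a staircase this shift is a combinatorial alternating sum of the step lengths; matching that sum against $t_s=\sum_{j\ge1}j\,a_{s+j}$ yields equality. If I may instead invoke the known clean statement that $V_s=t_s$ for L-space knots, the proof collapses to citing it and assembling the surgery formula. The only remaining care is bookkeeping of orientation conventions in the second input, so that the final sign is $+2t_0$ rather than $-2t_0$; I would pin this down against the base case $T_{2,3}$, where $\Delta_{2,3}=t-1+t^{-1}$ gives $t_0=1$ and so the formula predicts $d=2V_0(T_{2,3})=2$, which I would check against the $d$-invariant of the relevant Brieskorn sphere.
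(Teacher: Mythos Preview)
The paper does not prove this proposition at all: it is stated as Proposition~7 in \cite{batson} (with the underlying computation attributed to Ozsv\'ath--Szab\'o \cite{OSref}) and then used as a black box to compute $d(p,q)$. So there is no ``paper's own proof'' to compare against; your proposal supplies an argument where the paper simply cites one.

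That said, your outline is the standard route to this identity and is essentially correct. The three inputs you isolate---torus knots are L-space knots, the large/integer surgery formula expressing $d(S^3_{\pm 1}(K))$ in terms of $V_0$, and the identification $V_s=t_s$ for L-space knots---are exactly the ingredients in the Ozsv\'ath--Szab\'o computation, and your plan to verify $V_0=t_0$ directly from the staircase model is the right way to make the third input self-contained. The one place to be careful, as you flag, is the orientation/mirror bookkeeping: the relation $d(S^3_{-1}(K))=2V_0(K)$ holds for $K$ a \emph{positive} torus knot (equivalently, one uses $S^3_{-1}(K)=-S^3_{+1}(m(K))$ together with $V_0(m(K))=0$ for the negative torus knot and the mirror relation on $d$), and your proposed sanity check against $T_{2,3}$ is the right way to pin the sign. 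If you are permitted to cite the clean statement $V_s=t_s$ for L-space knots (e.g.\ from Ozsv\'ath--Szab\'o or Rasmussen), then as you say the argument collapses to assembling citations---which is effectively what the paper itself does.
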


Now we will use this to compute the $d$-invariant for general $T_{p,q}$ torus knots, which we denote $d(p, q)$. For the convenience of the reader, we restate Theorem \hyperref[dinvTHM]{1.1}.

{{\noindent\bfseries Theorem 1.1.}\label{dinvTHM} \textit{If} $p<q$,
\[ d(p,q)=2\left(\left\lfloor\dfrac{p}{2} \right\rfloor+\sum_{k=0}^{\lfloor\frac{p}{2} \rfloor-1}\left\lfloor\frac{(p-1-2k)q-p-1}{2p}\right\rfloor\right). \]}

\begin{proof}

    To prove this, we make use of the following two known equations.
    \begin{equation}\label{eqn}
        \dfrac{t^p-1}{t-1}=\sum_{i=0}^{p-1}t^i
    \end{equation}
    and
    \begin{equation}\label{eqn:inverse}
        \dfrac{t-1}{t^p-1}=\sum^{\infty}_{i=0}t^{ip}\delta 	
    \end{equation}
    where $\delta=1-t.$
    As we stated before, we will be considering $t^{(p-1)(q-1)/2}\Delta_{p, q}(t)$ for ease of notation. Using equations~\ref{eqn} and~\ref{eqn:inverse}, we get  
    \begin{equation}\label{doublesum}
     t^{(p-1)(q-1)/2}\Delta_{p, q}(t)=\dfrac{(t^{pq}-1)}{(t^{q}-1)}\dfrac{(t-1)}{(t^{p}-1)}=\sum^{p-1}_{k=0}t^{kq}\sum^{\infty}_{n=0}t^{np}\delta=\sum^{p-1}_{k=0}\sum^{\infty}_{n=0}t^{np+kq}\delta. 
     \end{equation}
 
    Note that when we multiply by $t^{(p-1)(q-1)/2}$, the constant term $c$ of $\Delta_{p,q}(t)$ becomes $c t^{(p-1)(q-1)/2}$ in $t^{(p-1)(q-1)/2}\Delta_{p, q}(t)$. Therefore, we only need to consider terms of Equation~\ref{doublesum} with powers less than $\frac{(p-1)(q-1)}{2}$. Multiplying $t^{np + kq}\delta$ we obtain $t^{np+kq} - t^{np+kq + 1}$, thus the highest power that we need to consider is $np + kq +1$. Hence, we have the inequality $np+kq\le \frac{(p-1)(q-1)}{2}-1$. To obtain a bound on $k$, we put $n=0$ to get $k\le \frac{p-1}{2}-\frac{p+1}{2q}$.

    Because $k$ is an integer, we apply the floor function to each side to get $k\le \lfloor \frac{p-1}{2}-\frac{p+1}{2q} \rfloor$. If $n$ is odd, $\frac{p-1}{2}$ is an integer; as $p<q$, the term $\frac{p+1}{2q}$ is less than $\frac{1}{2}$ but greater than $0$, so $k\le \lfloor\frac{p}{2} \rfloor-1$. Else, if $n$ is even, $\frac{p}{2}$ is an integer; the fractional part, by the previous, is less than $1$ but strictly greater than $\frac{1}{2}$, so we again get $k\le \lfloor\frac{p}{2} \rfloor-1$. Now that we have a bound on $k$, we need to know bounds for $n$. Solving for $n$ in terms of $k$ yields the inequality $n\le \lfloor\frac{(p-1-2k)q-p-1}{2p}\rfloor$.

    Applying this to the sum, we can then consider the simplified polynomial
    
    \[\sum^{\lfloor\frac{p}{2}\rfloor-1}_{k=0}\sum^{\lfloor\frac{(p-1-2k)q-p-1}{2p}\rfloor}_{n=0}t^{np+kq}\delta. \]

    From here, calculating the weighted sum of the coefficients via Equation \ref{eqn:dinv} amounts to counting the occurrences of $\delta$, as $t^m\delta=t^m-t^{m+1}$ contributes $-(m-(m+1))=1$ to the sum. Each internal sum amounts to $\lfloor\frac{(p-1-2k)q-p-1}{2p}\rfloor+1$ terms; pulling each $1$ out of the sum and multiplying by $2$ gives the $d$-invariant.

    \begin{align*}
        d(p,q)&=2\left(\sum^{\lfloor\frac{p}{2}\rfloor-1}_{k=0}\sum^{\lfloor\frac{(p-1-2k)q-p-1}{2p}\rfloor}_{n=0}1\right)
        \\&=2\left(
        \sum^{\lfloor\frac{p}{2}\rfloor-1}_{k=0}\left(
        \left\lfloor\frac{(p-1-2k)q-p-1}{2p}\right\rfloor+1
        \right)
        \right)\\
        &=2
        \left(\left\lfloor\frac{p}{2}\right\rfloor+\sum^{\lfloor\frac{p}{2}\rfloor-1}_{k=0}
        \left\lfloor\frac{
        (p-1-2k)q-p-1}{2p}\right\rfloor\right).
    \end{align*}
\end{proof}

For our specific uses, we use this formula (along with our base cases) to give us the following result. 
	
\begin{prop}
	\[ d(6,q)=\begin{cases}
		18k    &\text{ if }q=12k+1,\\
		18k+6  &\text{ if }q=12k+5,\\
		18k+12 &\text{ if }q=12k+7,\\
		18k+18 &\text{ if }q=12k+11,
	\end{cases}	\]
 and
	\[ d(5,q)=\begin{cases}
		6k    &\text{ if }q=5k+1,\\
		6k+2  &\text{ if }q=5k+2,\\
		6k+4 &\text{ if }q=5k+3,\\
		6k+6 &\text{ if }q=5k+4.
	\end{cases}	\]
\end{prop}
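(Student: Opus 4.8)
The plan is to specialize the general formula from Theorem~\ref{dinvTHM} to the cases $p = 5$ and $p = 6$, and then simplify the resulting finite sums of floor functions into closed form for each residue class of $q$.

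\textbf{Case $p = 5$.} Here $\lfloor p/2 \rfloor = 2$, so the formula reads
\[
d(5, q) = 2\left( 2 + \left\lfloor \frac{4q - 6}{10} \right\rfloor + \left\lfloor \frac{2q - 6}{10} \right\rfloor \right).
\]
First I would reduce each floor term. Writing $q = 5k + r$ with $r \in \{1, 2, 3, 4\}$ (recall $\gcd(5, q) = 1$ rules out $r = 0$), the numerators become $4q - 6 = 20k + (4r - 6)$ and $2q - 6 = 10k + (2r - 6)$, so that
\[
\left\lfloor \frac{4q-6}{10}\right\rfloor = 2k + \left\lfloor \frac{4r-6}{10} \right\rfloor, \qquad \left\lfloor \frac{2q-6}{10}\right\rfloor = k + \left\lfloor \frac{2r-6}{10} \right\rfloor.
\]
Then one evaluates the constant floors for each of the four values $r = 1, 2, 3, 4$ and collects terms; this yields $d(5, q) = 2(2 + 3k + c_r) = 6k + (4 + 2c_r)$ for an explicit constant $c_r$, which should match the four cases $6k, 6k+2, 6k+4, 6k+6$ once one checks the small arithmetic. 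One subtlety is that Theorem~\ref{dinvTHM} assumes $p < q$, so the smallest cases (e.g. $q = 2, 3, 4$ against $p = 5$) are not covered by the general formula; these must be handled as base cases, either by direct computation of the Alexander polynomial via Proposition~\ref{dinv} or by invoking $T_{5,q} = T_{q,5}$ to re-run the formula with the roles of $p$ and $q$ swapped.

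\textbf{Case $p = 6$.} Now $\lfloor p/2 \rfloor = 3$, giving
\[
d(6, q) = 2\left( 3 + \left\lfloor \frac{5q - 7}{12} \right\rfloor + \left\lfloor \frac{3q - 7}{12} \right\rfloor + \left\lfloor \frac{q - 7}{12} \right\rfloor \right).
\]
Writing $q = 12k + r$ with $r \in \{1, 5, 7, 11\}$ (the residues mod $12$ coprime to $6$), each numerator splits as a multiple of $12$ plus a bounded remainder: $5q - 7 = 60k + (5r - 7)$, $3q - 7 = 36k + (3r - 7)$, $q - 7 = 12k + (r - 7)$, so the three floors become $5k + \lfloor (5r-7)/12 \rfloor$, $3k + \lfloor (3r-7)/12 \rfloor$, and $k + \lfloor (r-7)/12 \rfloor$. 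Summing gives $d(6,q) = 2(3 + 9k + c_r) = 18k + (6 + 2c_r)$, and evaluating $c_r$ for $r = 1, 5, 7, 11$ should produce exactly $18k, 18k+6, 18k+12, 18k+18$.

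\textbf{Main obstacle.} The proof is entirely routine once the setup is in place: the only real work is the careful bookkeeping of the bounded remainder terms $\lfloor (ar - b)/m \rfloor$ for the handful of relevant $(a, r)$ pairs, where sign errors are easy to make (note some numerators can be negative, e.g. $r - 7 < 0$ for $r = 1, 5$, so $\lfloor (r-7)/12 \rfloor = -1$ there). I expect the genuine friction point to be the boundary between "base cases" and "formula cases": one must verify that the stated piecewise formulas are also correct for the small values of $q$ not satisfying $p < q$, which requires either a symmetry argument ($d(p,q) = d(q,p)$, legitimate since $S^3_{-1}(T_{p,q})$ depends only on the knot) or a handful of explicit Alexander polynomial computations to pin down the constants. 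Beyond that, one simply tabulates, sums, and matches.
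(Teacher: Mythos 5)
Your proposal is correct and follows essentially the same route as the paper: substitute $p=5,6$ into the general formula of Theorem~\ref{dinvTHM}, split $q$ into its residue class, and evaluate the resulting floor terms (the paper carries this out only for $q=12k+1$ and asserts the other cases are similar). Your explicit attention to the small values of $q$ with $q<p$, which the paper dispatches with a passing reference to ``base cases,'' is a welcome extra check and does verify against the stated formulas.
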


\begin{proof}
We will show this is true for $d(6,12k+1)$ as the rest of the cases follow similarly.
\begin{align*}
    d(6,12k+1)&=2
        \left(\left\lfloor\frac{6}{2}\right\rfloor+\sum^{\lfloor\frac{6}{2}\rfloor-1}_{j=0}
        \left\lfloor\frac{
        (6-1-2j)(12k+1)-6-1}{2(6)}\right\rfloor\right)\\
        &=2
        \left(3+\sum^{2}_{k=0}
        \left\lfloor\frac{
        (5-2j)(12k+1)-7}{12}\right\rfloor\right)\\
        &=2
        \left(3+
        \left\lfloor\frac{
        5(12k+1)-7}{12}\right\rfloor+\left\lfloor\frac{
        3(12k+1)-7}{12}\right\rfloor+\left\lfloor\frac{
        (12k+1)-7}{12}\right\rfloor\right)\\
        &=2
        \left(3+
        \left\lfloor\frac{
        5(12k)-2}{12}\right\rfloor+\left\lfloor\frac{
        3(12k)-4}{12}\right\rfloor+\left\lfloor\frac{
        (12k)-6}{12}\right\rfloor\right)\\
        &=2
        \left(3+9k+
        \left\lfloor\frac{
        -2}{12}\right\rfloor+\left\lfloor\frac{
        -4}{12}\right\rfloor+\left\lfloor\frac{
        -6}{12}\right\rfloor\right)\\
        &=2
        \left(3+9k-3\right)\\
        &=18k.
\end{align*}
\end{proof}

	We will now compute $\dfrac{\sigma(p,q)}{2}-d(p,q)$ for $p=5,6$ so that we can apply Theorem~\ref{boundsd}.
\begin{prop}
    	\begin{equation}\label{eqn:boundsd1}
		\dfrac{\sigma(6,q)}{2}-d(6,q)= \begin{cases}
			0&\text{ if }q=12k+1,\\
			2&\text{ if }q=12k+5,\\
			-3&\text{ if }q=12k+7,\\
			-1&\text{ if }q=12k+11,
		\end{cases}
    \end{equation}
and
    	\begin{equation}\label{eqn:boundsd2}
		\dfrac{\sigma(5,q)}{2}-d(5,q)= \begin{cases}
			0&\text{ if }q=10k+1,\\
			-2&\text{ if }q=10k+4,\\
			2&\text{ if }q=10k+6,\\
            0&\text{ if }q=10k+9,\\
            0&\text{ if }q=5k+2,\\
			0&\text{ if }q=5k+3.
		\end{cases}
	\end{equation}

\end{prop}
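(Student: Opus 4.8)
The plan is to simply substitute the explicit values of $\sigma(p,q)$ and $d(p,q)$ already established in Proposition~\ref{sigcalc} and the immediately preceding proposition into the expression $\frac{\sigma(p,q)}{2} - d(p,q)$, and check each residue class. This is a purely computational verification, so the bulk of the ``proof'' is arithmetic; the only real care needed is matching up the residue classes, since the signature formulas are stated in terms of $q \pmod{6}$ (resp. $q \pmod 5$) while the $d$-invariant formulas for $p=6$ are stated in terms of $q \pmod{12}$.

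First I would handle the $p=6$ case. Here the $d$-invariant is given mod $12$, so I split the two signature cases accordingly: $q = 6k+1$ subdivides into $q = 12k'+1$ and $q = 12k'+7$, and $q = 6k+5$ subdivides into $q = 12k'+5$ and $q = 12k'+11$. For $q = 12k+1$: $\frac{\sigma}{2} = \frac{18(2k)}{2} = 18k$ and $d = 18k$, giving $0$. For $q = 12k+7$: writing $q = 6(2k+1)+1$ we get $\sigma = 18(2k+1)$ so $\frac{\sigma}{2} = 18k+9$, while $d = 18k+12$, giving $-3$. For $q = 12k+5$: $\sigma = 18(2k)+16 = 36k+16$, so $\frac{\sigma}{2} = 18k+8$, and $d = 18k+6$, giving $2$. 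For $q = 12k+11$: writing $q = 6(2k+1)+5$, $\sigma = 18(2k+1)+16 = 36k+34$, so $\frac{\sigma}{2} = 18k+17$, and $d = 18k+18$, giving $-1$. These match the claimed values in Equation~\ref{eqn:boundsd1}.

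Next I would do the $p=5$ case. Both formulas are already in terms of $q \pmod 5$ (with the $10$-periodic cases being refinements), so the substitution is direct: for $q = 5k+1$, $\frac{\sigma}{2} - d$ with $\sigma = 24k'$ when $q=10k'+1$ gives $\frac{24k'}{2} - 6(2k'+0) = 12k' - 12k' = 0$, and one checks the analogous computation class by class ($q = 10k+4$: $\frac{24k+8}{2} - (6(2k)+6+\dots)$, etc.), and similarly for $q = 5k+2$ (using $\sigma = 12k+4$, $d = 6k+2$, giving $6k+2 - 6k - 2 = 0$) and $q = 5k+3$ ($\sigma = 12k+8$, $d = 6k+4$, giving $6k+4 - 6k-4 = 0$). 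These yield Equation~\ref{eqn:boundsd2}.

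I do not anticipate a genuine obstacle: the main risk is a bookkeeping slip in translating between the $\pmod 6$ and $\pmod{12}$ indexings for $p = 6$, so I would state explicitly at the start of the proof which substitution is being used in each block. Accordingly, the proof reads: ``This follows by substituting the values of $\sigma(p,q)$ from Proposition~\ref{sigcalc} and of $d(p,q)$ from the preceding proposition into $\frac{\sigma(p,q)}{2} - d(p,q)$, being careful in the case $p=6$ to split each signature residue class $q \equiv 1, 5 \pmod 6$ into the two residue classes mod $12$ on which $d(6,q)$ is defined.''
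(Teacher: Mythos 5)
Your proposal is correct and is exactly the argument the paper intends (the paper in fact omits the proof entirely, just as it reduces the analogous Proposition~\ref{prop:upsSigCalc} to ``this follows from our computations''); your case-by-case substitutions, including the careful splitting of the mod-$6$ signature classes into mod-$12$ classes for $p=6$, all check out arithmetically.
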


Note that this does not give us improved lower bounds on $\gamma_4(T_{p,q})$ for either $p=5$ or $p=6.$

\subsection{Linking Form Obstructions}

There is a useful obstruction to knots bounding M\"obius bands which comes from the linking form on the double branched cover of a knot over $S^{3}$, denoted $\DK$. We give the reader some brief background but we refer to Rolfsen's book \cite{rolfsen} to learn about double branched covers, a paper by Conway, Friedl, and Herrmann \cite{lkforms} to learn linking forms, and to Gilmer and Livingston \cite{GL} to learn the details of the obstruction. However, computing this obstruction mainly involves modular arithmetic, making this an approachable technique.

The linking form is a bilinear map on the first homology, $$\lambda : H_{1}(\DK ; \Z) \times H_{1} (\DK ; \Z) \to \Q/\Z. $$

\begin{cor}[Corollary 3 in \cite{GL}]\label{linkingNumThm}

Suppose that $H_{1}(D_{K}(S^{3})) = \mathbb{Z}_{n} $ where $n$ is the product of primes, all with odd exponent. Then if $K$ bounds a M\"{o}bius band in $B^{4}$, there is a generator $a \in H_{1} ( D_{K}(S^{3})) $ such that $ \lambda (a, a) = \pm \frac{1}{n}.$
    
\end{cor}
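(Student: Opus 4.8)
The plan is to derive the obstruction from the double branched cover of $B^4$ along the bounding M\"obius band. Suppose $K$ bounds a properly embedded M\"obius band $F\subset B^4$, and let $W$ be the double cover of $B^4$ branched over $F$, so that $\partial W=\DK$ (a rational homology sphere, since $K$ is a knot). A M\"obius band is homotopy equivalent to a circle, so $\chi(F)=0$ and hence $\chi(W)=2\chi(B^4)-\chi(F)=2$. The first task is to compute $H_*(W;\Q)$: the long exact sequence of $(W,\partial W)$, Poincar\'e--Lefschetz duality, and $H_1(\DK;\Q)=0$ give $b_3(W)=b_1(W)$, and then $\chi(W)=2$ forces $b_2(W)=1+2b_1(W)$. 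One then shows $b_1(W)=0$; the heart of this step is a Mayer--Vietoris computation on $B^4=(B^4\setminus\nu F)\cup\nu F$, controlling $H_1$ of the double cover of the M\"obius-band complement. Granting this, $b_2(W)=1$, so the intersection form of $W$ on the free part of $H_2(W)$ is a single integer $(m)$.

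Next I would convert this rank-one intersection form into the linking form on the boundary. The exact sequence $H_2(W)\xrightarrow{(m)} H_2(W,\partial W)\to H_1(\DK)\to H_1(W)$, together with Lefschetz duality $H_2(W,\partial W)\cong H^2(W)$, identifies $H_1(\DK)$ with the cokernel of multiplication by $m$ and identifies the linking form of $\DK$ with the form presented by $(m)$; for the class $a$ that is the image of a generator of $H_2(W)$ one gets $\lambda(a,a)=-\tfrac1m\pmod{\Z}$. The sign of $m$, equivalently whether $\sigma(W)=+1$ or $-1$, accounts for the $\pm$ in the statement. Matching $H_1(\DK)\cong\Z_n$ with $\Z/\lvert m\rvert$ gives $\lvert m\rvert=n$, hence $\lambda(a,a)=\pm\tfrac1n$.

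The place where the arithmetic hypothesis is essential, and the part I expect to be the main obstacle, is making these two steps tight. Because the double branched cover of a non-orientable surface can carry $2$-torsion in $H_1(W)$, the conclusion one obtains in general is only that the linking form of $\DK$ is isometric to $\langle\pm\tfrac1n\rangle$ after adding a metabolic summand, i.e.\ that $\lvert m\rvert$ is pinned down only up to an ambiguity visible at the prime $2$ (and that the relevant generator may a priori be determined only up to multiplication by a unit square of $\Z_n$). The assumption that $n$ is a product of primes each occurring to an \emph{odd} power removes this: such an $n$ is not a perfect square, and in fact $\Z_n$ decomposes as a sum of prime-power cyclic groups none of whose nonzero sub-summands has square order, so $\Z_n$ admits no nonzero metabolic linking form and no such metabolic or $2$-torsion summand can split off. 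Hence the ambiguity collapses, $W$ genuinely satisfies $H_1(W)=0$ with intersection form $(\pm n)$, and some generator $a$ of $H_1(\DK)$ has $\lambda(a,a)=\pm\tfrac1n$. Making the ``no metabolic summand'' reduction precise, and tracking the $2$-torsion of $H_1(W)$ carefully, is where the real work lies.
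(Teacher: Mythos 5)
First, a caveat: the paper does not prove this statement; it is quoted verbatim as Corollary~3 of Gilmer--Livingston \cite{GL}, so there is no in-paper argument to compare against. That said, your outline does follow the strategy of \cite{GL}: pass to the double cover $W$ of $B^{4}$ branched over the M\"obius band, compute $b_{1}(W)=0$ and $b_{2}(W)=1$, and read the linking form of $\DK$ off the rank-one intersection form. The Euler characteristic and duality bookkeeping in your first paragraph is correct, as is the observation that since $n$ is not a perfect square the form $(\Z_{n},\lambda)$ has no metabolizer.

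The gap is in the final step, which is exactly the step you flag as ``where the real work lies,'' so as written this is a strategy outline rather than a proof; moreover, the one concrete mechanism you propose for closing it does not work. The deduction ``$\Z_{n}$ admits no nonzero metabolic summand, hence the ambiguity collapses and $W$ genuinely satisfies $H_{1}(W)=0$'' is a non sequitur: $\mathrm{Tors}\,H_{1}(W)$ is a $2$-torsion group governed by the topology of the non-orientable surface $F$ (essentially by $H_{1}(F;\Z_{2})$), not by the linking form of the boundary, and it is generically nonzero; no arithmetic hypothesis on $n$ --- which is odd in any case, being the knot determinant --- can force it to vanish. When $\mathrm{Tors}\,H_{1}(W)$ is a nontrivial $2$-group, the long exact sequence of $(W,\partial W)$ only shows that $\operatorname{coker}Q_{W}$ is an extension of $\Z_{n}$ by a $2$-group, so $\det Q_{W}=\pm 2^{j}n$ for some $j\ge 0$, and the self-linking of the distinguished class is $\mp\,\overline{2^{-j}}/n$ rather than $\mp 1/n$. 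Converting this into the existence of a generator $a$ with $\lambda(a,a)=\pm\frac{1}{n}$ requires showing that the relevant unit is a square modulo $n$, and this unit-square ambiguity is precisely what the ``every prime divides $n$ to an odd power'' hypothesis is used to kill in Gilmer--Livingston's presented-form argument. Your non-squareness observation rules out a metabolizer of the boundary form, which is a different (and here insufficient) statement. To complete the proof you would need either to control $\mathrm{Tors}\,H_{1}(W)$ and the factor $2^{j}$ directly, or to invoke the presentation machinery of \cite{GL} rather than the clean $H_{1}(W)=0$ case.
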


It is a fact that the double branched covers of $S^{3}$ branched over torus knots $T_{p, q}$ are Brieskorn spheres $\Sigma(2, p, q)$  when $p$ is even~\cite{torus1}.

\begin{lem}[Lemma 5.3 in \cite{torus1}]\label{homZq}
    $H_{1} (\Sigma(2, p, q); \Z) \cong \Z_{q}$.
\end{lem}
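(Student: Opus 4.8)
The plan is to identify $\Sigma(2,p,q)$ with a cyclic branched cover over a $2$-bridge knot and then compute its first homology as an explicit quotient ring. Throughout, observe that since $p$ is even and $\gcd(p,q)=1$ the integer $q$ is odd, so $\gcd(p,2q)=2$.

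First I would recall the classical description of Brieskorn manifolds as iterated cyclic branched covers of $S^{3}$. Projecting $\{z_1^{2}+z_2^{p}+z_3^{q}=0\}\cap S^{5}$ onto the last two coordinates exhibits $\Sigma(2,p,q)$ as the double branched cover $\DK$ of $T_{p,q}$ (the identification already used in the text), while projecting onto the first and third coordinates exhibits it as the $p$-fold cyclic branched cover $\Sigma_{p}(T_{2,q})$ of the $2$-bridge torus knot $T_{2,q}$. The knot $T_{2,q}$ is fibered, and the $\Lambda:=\mathbb{Z}[t^{\pm1}]$-module $H_{1}$ of its infinite cyclic cover is the cyclic module $\Lambda/(\delta_{q}(t))$, where $\delta_{q}(t)=1-t+t^{2}-\cdots+t^{q-1}$ is the (un-symmetrized) Alexander polynomial; this is the standard ``chain of $q-1$ Hopf bands'' picture, whose symmetrized Seifert form is the tridiagonal $A_{q-1}$ matrix. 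The usual formula for the homology of cyclic branched covers then gives
\[
H_{1}(\Sigma(2,p,q);\mathbb{Z})\;\cong\;\Lambda/\bigl(\delta_{q}(t),\,t^{p}-1\bigr)\;\cong\;\mathbb{Z}[t]/\bigl(t^{p}-1,\ 1-t+t^{2}-\cdots+t^{q-1}\bigr),
\]
the last step using that $t$ is a unit modulo $t^{p}-1$.

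The heart of the proof is then purely ring-theoretic. Set $R:=\mathbb{Z}[t]/(t^{p}-1,\ \delta_{q}(t))$. Multiplying the relation $\delta_{q}(t)=0$ by $1+t$ and using that $q$ is odd gives $1+t^{q}=0$, so $t^{q}=-1$ in $R$; hence $t^{2q}=1$, and since also $t^{p}=1$ and $t$ is a unit, $t^{\gcd(p,2q)}=t^{2}=1$. Because $q$ is odd, $t^{q}=t\cdot(t^{2})^{(q-1)/2}=t$, so $t=-1$ in $R$. Adjoining the relation $t+1=0$ collapses $t^{p}-1$ (as $(-1)^{p}-1=0$) and sends $\delta_{q}(t)=\sum_{i=0}^{q-1}(-1)^{i}t^{i}$ to $\sum_{i=0}^{q-1}1=q$; therefore $R\cong\mathbb{Z}/q\mathbb{Z}$. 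This yields $H_{1}(\Sigma(2,p,q);\mathbb{Z})\cong\mathbb{Z}_{q}$ and simultaneously recovers the order.

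I expect the only delicate point to be the identification of $H_{1}(\Sigma_{p}(T_{2,q}))$ with the quotient ring above — that is, pinning down the $\Lambda$-module structure of the Alexander module of $T_{2,q}$ and the formula $H_{1}(\Sigma_{n}(K))\cong\Lambda/(\Delta_{K},t^{n}-1)$ for knots with cyclic Alexander module; both are classical. As an independent sanity check on the order, one has $|H_{1}(\DK)|=|\Delta_{p,q}(-1)|$, and evaluating Equation~\ref{symmAlexPoly} via $\frac{t^{pq}-1}{t^{p}-1}=\sum_{i=0}^{q-1}t^{ip}$ and $\frac{t-1}{t^{q}-1}=1/\sum_{j=0}^{q-1}t^{j}$ at $t=-1$ gives $q\cdot 1=q$, using $p$ even and $q$ odd. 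A route that avoids branched-cover machinery is to use the Seifert fibered structure of $\Sigma(2,p,q)$ over the orbifold $S^{2}(2,p,q)$ and read $H_{1}$ from the presentation associated to the Seifert invariants, eliminating the generators coming from the order-$2$ and order-$p$ exceptional fibers (legitimate because $q$ is odd) to see directly that the group is cyclic of order $q$; I find the ring-theoretic argument cleaner to write.
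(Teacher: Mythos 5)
Your argument is correct, and it is worth noting at the outset that the paper you were asked to match offers no proof at all here: Lemma~\ref{homZq} is quoted verbatim from \cite{torus1}, so there is nothing internal to compare against. Your route --- viewing $\Sigma(2,p,q)$ as the $p$-fold cyclic branched cover of $T_{2,q}$, invoking the cyclic Alexander module $\mathbb{Z}[t^{\pm1}]/(\delta_q(t))$, and then collapsing the quotient ring $\mathbb{Z}[t]/(t^{p}-1,\delta_q(t))$ to $\mathbb{Z}/q$ --- is sound: the key manipulations ($(1+t)\delta_q(t)=1+t^{q}$ for odd $q$, hence $t^{2q}=1$, hence $t^{\gcd(p,2q)}=t^{2}=1$ using that $t$ is a unit, hence $t=-1$ and $\delta_q(-1)=q$) all check out, and this gives cyclicity and the order simultaneously, which is the whole content of the lemma (the determinant computation $|\Delta_{p,q}(-1)|=q$ alone would only give the order, as you correctly flag). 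The only points a referee would press you on are the two ``classical'' inputs you defer: the Brieskorn identification of the two branched-cover structures on $\{z_1^2+z_2^p+z_3^q=0\}\cap S^5$, and the formula $H_1(\Sigma_n(K))\cong H_1(X_\infty)/(t^n-1)$ together with cyclicity of the Alexander module of a torus knot; both are standard and correctly stated, so citing them (Milnor, or Kawauchi/Lickorish for the branched-cover homology formula) closes the argument. The alternative you sketch at the end --- reading $H_1$ off the Seifert-fibered presentation of $\Sigma(2,p,q)$ over $S^2(2,p,q)$ --- is closer to how such lemmas are usually proved in the surgery/plumbing literature and generalizes more readily to $\Sigma(2,p,q)$ with $p$ odd (where the group is $\mathbb{Z}_{pq}$... in the relevant parity cases), but for the specific statement at hand your ring-theoretic computation is cleaner and fully rigorous.
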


Observe that by combining Lemma \ref{homZq} and the definition of the linking form, we can rewrite $\lambda$ as a bilinear map $\lambda: \Z_{q} \times \Z_{q} \to \Q/ \Z$.

\begin{prop}[Proposition 5.2 in \cite{torus1}]\label{lkformTorusknots}
    For any relatively prime positive integers $p$ and $q$ with $p$ even, we have $\lambda (a, a) \equiv -\frac{p}{2q} \pmod{1} $ for some generator $a$ of $H_{1}(\Sigma(2, p, q); \Z)$.
\end{prop}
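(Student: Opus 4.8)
The plan is to compute the linking form on $H_1(\Sigma(2,p,q);\Z)\cong\Z_q$ directly from a surgery description of the Brieskorn sphere, and to track the quadratic term $\lambda(a,a)$ for a chosen generator $a$. Since $\Sigma(2,p,q)$ is the double branched cover of $S^3$ over $T_{p,q}$, one standard route is to present $\Sigma(2,p,q)$ as surgery on a Seifert-fibered link (or to use a Goeritz/Seifert matrix for the standard genus-minimizing surface of $T_{p,q}$). The linking form of a rational homology sphere obtained by surgery on a framed link with linking matrix $M$ is given by $-M^{-1}$ read modulo $\Z$ on the relevant quotient; so the whole computation reduces to inverting a small, explicitly structured matrix and extracting one diagonal entry.

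Concretely, I would first recall that a Seifert matrix $V$ for $T_{p,q}$ has $V+V^{t}$ presenting $H_1(\Sigma(2,p,q))$, with $\det(V+V^{t})=\pm\Delta_{p,q}(-1)=\pm q$ (the last equality coming from Equation~\ref{symmAlexPoly} evaluated at $t=-1$, using that $p$ is even so the factor $(1-t^p)$ contributes the zero that cancels the $(1-t^{pq})$ zero, leaving $q$); this re-proves Lemma~\ref{homZq} and simultaneously sets up the linking form as $(V+V^{t})^{-1}\bmod\Z$ on $\Z_q$. Second, I would pick the generator $a$ dual to a convenient basis element and compute the corresponding diagonal entry of $(V+V^{t})^{-1}$ over $\Q$. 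Third, I would reduce that rational number modulo $1$ and simplify the resulting fraction, using $\gcd(p/2,q)=1$ (which holds because $\gcd(p,q)=1$ and $q$ is odd, $p$ even) to see that $-\tfrac{p}{2q}$ is indeed a valid value of the form at a generator — i.e.\ that after multiplying by the unit class we may normalize $a$ so that $\lambda(a,a)\equiv-\tfrac{p}{2q}\pmod 1$. The freedom to rescale $a$ by any unit $u\in\Z_q^\times$, which multiplies $\lambda(a,a)$ by $u^2$, is what lets us land on the clean representative.

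An alternative, and perhaps cleaner, route I would keep in reserve: realize $\Sigma(2,p,q)$ by surgery on a three-component chain or a star-shaped plumbing coming from the Seifert invariants $(2,p,q)$, write down the $3\times3$ (or small plumbing) linking matrix with diagonal entries determined by continued-fraction expansions of the Seifert data, invert it, and read off the diagonal entry associated to the generator of the order-$q$ part of $H_1$. Either way the arithmetic is elementary; the only subtlety is bookkeeping the sign and the choice of orientation convention on the double branched cover (which accounts for the overall $-$ sign in $-\tfrac{p}{2q}$), and making sure the chosen class actually generates $\Z_q$ rather than a proper subgroup.

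The main obstacle I expect is not any deep idea but rather pinning down the generator precisely: one must verify that the element of $H_1(\Sigma(2,p,q))$ at which the form takes the value $-\tfrac{p}{2q}$ is a genuine generator of $\Z_q$, and that the normalization by a unit square is legitimate (equivalently, that $-p/2$ is a unit mod $q$, which follows from coprimality). Getting the sign/orientation convention consistent with the cited references of \cite{torus1} and \cite{GL} is the other place where care is needed, since the statement is only up to the sign ambiguity already present in Corollary~\ref{linkingNumThm}.
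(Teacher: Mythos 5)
There is no in-paper proof to compare against here: the statement is quoted verbatim as Proposition~5.2 of \cite{torus1}, and the present paper only uses it. So your proposal has to stand on its own, and as written it is an outline of the standard strategy rather than a proof. The machinery you invoke is correct --- $\Sigma(2,p,q)$ bounds the double cover of $B^4$ branched over a pushed-in Seifert surface, whose intersection form $V+V^{t}$ has determinant $\pm\Delta_{p,q}(-1)=\pm q$ and presents the linking form as $\pm(V+V^{t})^{-1}\bmod\Z$; the plumbing alternative is equally legitimate. But the proposition is not the assertion that the linking form is \emph{some} fraction with denominator $q$; it is the assertion that the specific value $-\tfrac{p}{2q}$ occurs at a generator. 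Everything you actually carry out (the order of $H_1$, the general shape of the presentation) is setup, and the step that constitutes the proof --- identifying a generator of the cyclic cokernel of a $(p-1)(q-1)\times(p-1)(q-1)$ matrix (hardly ``small'') or of the star-shaped plumbing matrix, and evaluating the inverse form there --- is deferred to ``elementary arithmetic'' that is never performed. For a statement whose entire content is one explicit rational number mod $1$, that is the gap.

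The second, more structural problem is the normalization step. You correctly note that replacing $a$ by $ua$ multiplies $\lambda(a,a)$ by $u^{2}$, but you then conclude that this freedom ``lets us land on the clean representative $-\tfrac{p}{2q}$.'' It does not: the value of $\lambda(a,a)$ over all generators $a$ ranges only over the \emph{square class} of any one such value, and for odd $q$ the squares form a proper subgroup of $\Z_q^{\times}$ (index $2$ when $q$ is prime). So showing that your computed diagonal entry can be normalized to $-\tfrac{p}{2q}$ is equivalent to showing it lies in the square class of $-\tfrac{p}{2q}$ --- which is exactly the proposition, restated. As it stands the argument is circular at the precise point where the content lives. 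To repair it you would either exhibit a concrete generator at which the form literally evaluates to $-\tfrac{p}{2q}$ (e.g.\ by working with the lens-space or plumbing description and tracking the class of a singular fiber), or prove the square-class identification directly; checking $\gcd(p/2,q)=1$ only shows $-\tfrac{p}{2q}$ is a \emph{possible} value of a nonsingular form on $\Z_q$, not that it is the value of \emph{this} form.
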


From Proposition \ref{lkformTorusknots}, we see that the linking form is $\lambda (a, a) = - \frac{p}{2q}$. We wish to determine whether or not there exists a integer $n$ so that $\lambda (na, na) = \pm \frac{1}{q}$ so that we can apply Corollary \ref{linkingNumThm} and Lemma \ref{homZq}. 

Supposing $\lambda (na, na) = \pm \frac{1}{q}$ we have $\lambda(na, na) = n^{2} \lambda(a, a) = n^{2} (- \frac{p}{2q}) $ by bilinearity. Thus, we only need to determine whether $- \frac{p n^{2}}{2}$ is congruent to $\pm 1 \pmod{q}$. If there is not a congruence, then we have satisfied the obstruction and $T_{p,q}$ does not bound a M\"obius band.

For $T_{6, q}$ torus knots, recall that we have an upper bound $\gamma_{4}(T_{6, q}) \leq 3$ for all $q$ and a lower bound $2 \leq \gamma_{4}(T_{6,q})$ for $q \equiv 5, 7, 11 \pmod{12}$. We wish to determine whether or not $T_{6,q}$  bounds a M\"obius band when $q \equiv 1 \pmod{12}$. Since $T_{6,1}$ is the unknot, we only consider $q > 1$.

\begin{lem}
If $q \equiv 0 \pmod{5}$, then $\gamma_{4}(T_{6, q}) \geq 2$.

\end{lem}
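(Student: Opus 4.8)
The plan is to apply the linking form obstruction machinery developed in the preceding paragraphs to show that $T_{6,q}$ does not bound a M\"obius band when $q\equiv 0\pmod 5$, which by the upper bound $\gamma_4(T_{6,q})\le 3$ (in fact just by needing $\gamma_4\ge 2$) gives the claim. By Proposition~\ref{lkformTorusknots}, for $p=6$ the linking form on $H_1(\Sigma(2,6,q);\Z)\cong\Z_q$ satisfies $\lambda(a,a)\equiv -\frac{3}{q}\pmod 1$ for a generator $a$. As explained after Proposition~\ref{lkformTorusknots}, if $T_{6,q}$ bounds a M\"obius band then (via Corollary~\ref{linkingNumThm} and Lemma~\ref{homZq}, noting $q\equiv 0 \bmod 5$ with $q$ coprime to $6$ means we need $H_1=\Z_q$ with $q$ a product of primes to odd powers — this is a hypothesis we either impose or handle) there must exist an integer $n$ with $-3n^2\equiv \pm 1\pmod q$. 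So the core of the proof is the number-theoretic claim: if $5\mid q$, then $-3n^2\equiv\pm1\pmod q$ has no solution.

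First I would reduce the congruence modulo $5$: since $5\mid q$, any solution mod $q$ gives a solution mod $5$, so it suffices to show $-3n^2\equiv\pm1\pmod 5$ is unsolvable. The squares mod $5$ are $\{0,1,4\}$, so $-3n^2\equiv 2n^2$ takes values in $\{0,2,8\}\equiv\{0,2,3\}\pmod 5$, none of which is $\pm1\equiv\{1,4\}\pmod 5$. Hence no such $n$ exists mod $5$, a fortiori none mod $q$. Therefore the obstruction of Corollary~\ref{linkingNumThm} is satisfied, $T_{6,q}$ does not bound a M\"obius band, so $\gamma_4(T_{6,q})\ge 2$.

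The main technical point to be careful about is the hypothesis of Corollary~\ref{linkingNumThm}, which requires $H_1(D_K(S^3))=\Z_n$ with $n$ a product of primes each to an odd power. Here $n=q$, so this needs $q$ to be squarefree (or at least have all prime exponents odd). I would address this by restricting attention to such $q$, or by observing that the relevant cases for the theorem statement in the introduction ($q\equiv 1\pmod{12}$ combined with $q\equiv 0\pmod 5$) can be handled — and if $q$ is not squarefree one may need a separate argument or the statement is understood with that caveat; I expect the paper intends the squarefree case or notes that the obstruction still applies in the stated generality via a refinement. Assuming that hypothesis is met, the argument is short: the only real content is the mod-$5$ quadratic residue check, which is the easy part, so the "obstacle" is purely bookkeeping about when Corollary~\ref{linkingNumThm} applies rather than any genuine difficulty.
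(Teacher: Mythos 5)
Your proof is correct and takes essentially the same approach as the paper: apply Proposition~\ref{lkformTorusknots} to get $\lambda(a,a)=-3/q$, and show $-3n^2\equiv\pm1\pmod q$ is unsolvable by reducing modulo $5$ and checking that $2n^2\in\{0,2,3\}\pmod 5$ misses $\{1,4\}$. Your explicit reduction from mod $q$ to mod $5$ is in fact stated more carefully than the paper's phrasing (``we consider $q=5$ and note this encompasses the results for $q\equiv 0\pmod 5$''), and the caveat you raise about the hypothesis of Corollary~\ref{linkingNumThm} requiring all prime exponents of $q$ to be odd (e.g.\ $q=25$) is a genuine point that the paper's own proof does not address either.
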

\begin{proof}
    Given $T_{6, q}$ where $q \equiv 1 \pmod{12}$, we consider $\lambda(a , a) = - \frac{6}{2q} = - \frac{3}{q}$. As outlined above, we check if $- 3n^{2}$ is congruent to $\pm 1 \pmod{q}$. We consider $q=5$ and note this encompasses the results for $q \equiv 0 \pmod{5}$. Suppose there exists some $n \in \Z$ so that $\lambda(na, na) = \pm\frac{1}{5}$. Then, $-\frac{3}{5}  = n^{2} (\pm \frac{1}{5})$, which is not possible, as this would imply $-3n^{2} \equiv \pm 1 \pmod{5}$. Thus, we have met the obstruction in Corollary \ref{linkingNumThm}.
\end{proof}

\begin{rem}
    For $K = T_{6, q}$ where $q \equiv 1 \pmod{12}$, the linking form on $\DK$ meets the linking form obstruction when $q \equiv 0 \pmod{5}$, but does not meet the linking form obstruction for all $q\not\equiv 0\pmod{5}$. As these torus knots in this latter case have not met any other obstruction mentioned in the paper, the best we can say is that $\gamma_{4}(T_{6, q}) \geq 1$, meaning we do not yet know whether they bound a M\"obius band. 
\end{rem}

\newpage
\nocite{*}
\bibliography{torus_refs}
\bibliographystyle{plain}

\end{document}